\documentclass[12pt,letter]{amsart}
\usepackage{amscd}
\usepackage{amssymb}

\usepackage{amsfonts,amssymb, amscd, latexsym, graphicx, psfrag, color,float, tikz-cd}
\allowdisplaybreaks[1]

\usepackage{caption}
\usepackage{comment}
\usepackage{wrapfig}
\usepackage[all]{xy}
\usepackage{mathrsfs}
\usepackage{mathtools}
\usepackage{marvosym}
\usepackage{stmaryrd}
\usepackage{srcltx}
\usepackage{upgreek} 
\usepackage[inline]{enumitem} 

\usepackage[centering,text={15.5cm,23cm}]{geometry} 

\usepackage[bookmarks=true, bookmarksopen=true,%
bookmarksdepth=3,bookmarksopenlevel=2,%
colorlinks=true,%
linkcolor=blue,%
citecolor=blue,%
filecolor=blue,%
menucolor=blue,%
urlcolor=blue]{hyperref}

\usepackage{tikz}
\usetikzlibrary{matrix,shapes,arrows,arrows.meta,calc,topaths,intersections,hobby,positioning,decorations.pathreplacing,decorations.pathmorphing,fit,patterns}
\tikzset{cross/.style={cross out, draw=black, fill=none, minimum size=2*(#1-\pgflinewidth), inner sep=0pt, outer sep=0pt}, cross/.default={2pt}}

\usepackage{braket}

\DeclareFontFamily{U}{mathx}{}
\DeclareFontShape{U}{mathx}{m}{n}{ <-> mathx10 }{}
\DeclareSymbolFont{mathx}{U}{mathx}{m}{n}
\DeclareFontSubstitution{U}{mathx}{m}{n}
\DeclareMathAccent{\widecheck}{0}{mathx}{"71}

\definecolor{mygray}{gray}{0.75} 

\definecolor{shadecolor}{rgb}{1,0.9,0.7}

\newtheorem{theorem}{Theorem}[section]

\newtheorem{lemma-definition}[theorem]{Lemma-Definition}
\newtheorem{proposition}[theorem]{Proposition}
\newtheorem{corollary}[theorem]{Corollary}
\newtheorem{conjecture}[theorem]{Conjecture}

\theoremstyle{definition}

\newtheorem{definition}[theorem]{Definition}

\newtheorem{example}[theorem]{Example}

\newtheorem{remark}[theorem]{Remark}

\numberwithin{equation}{section}
\numberwithin{figure}{section}

\newcommand{\barM}{\overline{\mathcal{M}}}

\newcommand{\FF} {\mathbb{F}}

\newcommand{\bP}{\mathbb{P}}

\newcommand{\cO}{\mathcal{O}}

\def\mydate{\ifcase\month \or January\or February\or March\or
April\or May\or June\or July\or August\or September\or October\or 
November\or December\fi \space\number\day,\space\number\year}
\title{Open-closed duality in higher genus and winding}

\begin{document}

\author{Benjamin Zhou \textsuperscript{1}}
\email{byzhou@mail.tsinghua.edu.cn}

\address{\textsuperscript{1}Yau Mathematical Sciences Center, Tsinghua University, Haidian District,
Beijing, China}

\begin{abstract}
Let $X$ be a toric Fano surface. Let $\pi: \widehat{X} \rightarrow X$ be a toric blow up at a point. We use the Topological Vertex \cite{AKMV} to prove a higher genus, higher winding, open-closed duality between the toric Calabi-Yau 3-folds $K_X, K_{\widehat{X}}$. For $g \geq 0, w \geq 1$, we show the equality $n_g(K_{\widehat{X}}, \pi^*\beta - wC) = (-1)^g N_{g, (w)}^{LMOV}(K_X/L, \beta)$, where $n_g(K_{\widehat{X}}, \pi^*\beta - wC)$ is the genus-$g$, Gopakumar-Vafa invariant of $K_{\widehat{X}}$ in curve class $\pi^*\beta-wC$, where $\beta \in H_2(X, \mathbb{Z})$ and $C$ is the exceptional curve, and $N_{g, (w)}^{LMOV}(K_X/L, \beta)$ is the genus-$g$, LMOV invariant of an outer Aganagic-Vafa brane $L \subset K_X$ in class $\beta$ and representation $(w)$, or the Young Tableau of a single row with $w$ boxes.
\end{abstract}

\maketitle
\setcounter{tocdepth}{1}
\tableofcontents

\section{Introduction}

Let $X$ be a toric Fano surface. Let $\widehat{X} := Bl_p X$ be the toric blow up of $X$ at a toric fixed point $p \in X$, with the projection map $\pi: \widehat{X} \rightarrow X$. Let $\beta \in H^+_2(X ,\mathbb{Z})$ be an effective curve class, and let $C \in H_2(\widehat{X}, \mathbb{Z})$ be the class of the exceptional curve. Consider the toric Calabi-Yau 3-folds $K_X$, $K_{\widehat{X}}$. Let $L \subset K_X$ be an outer Aganagic-Vafa brane \cite{FL}. 

Let $g \geq 0, w \geq 1$. Let $n_g(K_{\widehat{X}}, \pi^*\beta - wC)$ be the genus-$g$, Gopakumar-Vafa invariant of $K_{\widehat{X}}$ in curve class $\pi^*\beta - wC$ \cite{GV1}\cite{GV2}. Let $N_{g, (w)}^{LMOV}(K_X/L, \beta)$ be the genus-$g$, LMOV invariant of $L \subset K_X$ in curve class $\beta \in H_2(K_X, L)$ and labelled with $U(N)$-representation $(w)$ that is a Young Tableau of a single row with $w$ boxes \cite{AKMV}. 

In this paper, we show the following equality of open and closed invariants in higher genus and winding,

\begin{theorem}
\label{thm:oc}
    For $g \geq 0, w \in \mathbb{Z}_{\geq 1}$,
    
    \[
    n_g(K_{\widehat{X}}, \pi^*\beta - wC) = (-1)^g N_{g, (w)}^{LMOV}(K_X/L, \beta)
    \]
\end{theorem}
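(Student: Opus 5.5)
We compute both sides with the Topological Vertex and compare the resulting generating functions.

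\textbf{Closed side.} The toric diagram of $K_{\widehat{X}}$ is obtained from that of $K_X$ by cutting the corner at $p$. This replaces the vertex at $p$ by two vertices joined by a new internal edge, the exceptional curve $C$. The vertex at $p$ is adjacent to two toric divisors $D_1, D_2$ of $X$. Gluing vertices along all internal edges, the closed partition function $Z_{K_{\widehat{X}}}$ is a sum over partitions $\nu$ on the edge $C$ of $Q_C^{|\nu|}$ times a framing factor times a product of two three-leg vertices. Each of these two vertices has one leg labeled $\nu$, and its other two legs are glued to the rest of the diagram, which is identical to the diagram of $K_X$ with the vertex at $p$ removed.

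\textbf{Open side.} The outer brane $L$ sits on an external leg of $K_X$. By the usual choice (after a toric symmetry of $X$), we place it on the external leg at the vertex $p$. Its partition function is
\[
Z_{K_X/L} = \sum_{\mu} Z_{\mu}\, \mathrm{Tr}_\mu V,
\]
where $Z_\mu$ is the same gluing with the vertex at $p$ carrying $\mu$ on its external leg, with a framing factor.

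\textbf{Matching.} Substitute $Q_{\pi^*\beta - wC} = Q^\beta Q_C^{-w}$ and extract the coefficient of $Q_C^{-w}$, or equivalently expand in the appropriate chamber. The key step is to use the commutation and vertex identities for skew Schur functions, namely the Cauchy identities, to show the following. In the sum over $\nu$, the product of the two vertices at the ends of $C$, after summing over $\nu$ with weight $Q_C^{|\nu|}$, produces the vertex at $p$ with its external leg labeled by partitions, times an explicit factor. This is essentially the statement that the blown-up geometry behaves like a brane with holonomy $V$ specialised so that $\mathrm{Tr}_\mu V \mapsto s_\mu$ evaluated at $Q_C^{-1}q^{-\rho}$-type variables. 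One expects the single-row representations $(w)$ to appear through the specialisation
\[
s_\mu(x) \text{ with } x = (Q_C^{-1}, 0, 0, \dots),
\]
which kills all $\mu$ except single rows. So the coefficient of $Q^\beta Q_C^{-w}$ in $\log Z_{K_{\widehat{X}}}$ matches the coefficient of $Q^\beta\,\mathrm{Tr}_{(w)}$ in the open free energy, up to framing and sign.

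\textbf{Passing to integer invariants.} Take logarithms. On the closed side, the Gopakumar–Vafa expansion gives
\[
\sum n_g\,(q^{1/2}-q^{-1/2})^{2g-2}\,\frac{Q^{d\gamma}}{d}.
\]
On the open side, the LMOV expansion uses $f_R$ and the Clebsch–Gordan and $S_{\mu}$ change of basis. With the one-variable specialisation, only hook and single-row terms survive, and the LMOV multicover formula restricted to $(w)$ reduces to the same plethystic structure as the closed Gopakumar–Vafa formula. Comparing the coefficients $(q^{1/2}-q^{-1/2})^{2g-1}$ against $(q^{1/2}-q^{-1/2})^{2g-2}$ uses the relation between $\widehat{S}_{(w)}$-type factors and the extra factor of $(q^{1/2}-q^{-1/2})$. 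Tracking this conversion yields the sign $(-1)^g$, which arises from $q^{1/2}\mapsto -q^{1/2}$ or from the framing/orientation convention.

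\textbf{Main obstacle.} The main obstacle is this last bookkeeping. We must verify that the LMOV integrality transformation, which involves the full representation ring, restricts consistently to single-row representations under the specialisation. This may require an inductive argument on $w$ or a direct identification of $f_{(w)}$ with the multicover-stripped closed free energy. We also need to get the framing factors $(-1)^{|\nu|}q^{\kappa/2}$ to cancel exactly. We would first check the genus-0, winding-1 case against known results, such as the open/closed correspondence of Liu–Yu and Chan, and then fix conventions.
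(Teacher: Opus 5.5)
Your overall architecture matches the one the paper uses: rewrite the closed generating function of $K_{\widehat{X}}$ as the open partition function of $(K_X,L)$ with the holonomy specialised, take the logarithm, and compare multiple-cover structures. The paper imports the rewriting as Theorem \ref{thm:GRZZ}, which rests on flop invariance \cite{KM}: $C$ is turned into $E$ and a one-leg vertex is glued on. Your skew-Cauchy manipulations would have to reproduce that.

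The genuine gap is the step that is supposed to isolate $(w)$. The gluing specialises $\mathrm{Tr}_\nu V\mapsto(-1)^{|\nu|}C_{\nu^T\varnothing\varnothing}Q^{|\nu|E}=(-1)^{|\nu|}s_{\nu^T}(q^{\rho})Q^{|\nu|E}$. As a rational function in $q^{1/2}$ this is $s_\nu$ evaluated at $Q^{E}q^{-\rho}$. So your first description was the right one, and it kills no $\nu$: every $Z_\nu$ with $|\nu|=w$ contributes.

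The one-variable specialisation $x=(Q_C^{-1},0,0,\dots)$ that you then invoke is not what the geometry produces, and it would not give the theorem anyway. It extracts $f_{(w)}$. By Equation \ref{eq:f_by_fhat}, using $\chi^{(w)}\equiv 1$ and orthogonality, $f_{(w)}=(q^{\frac12}-q^{-\frac12})^{-1}\sum_{\nu'}S_{\nu'}(q)\widehat f_{\nu'}$. This is a $q$-weighted combination of $\widehat f_{\nu'}$ over all hooks $\nu'\dashv w$ (compare $f_{(2)}$ in Example \ref{ex:w2}), not $\widehat f_{(w)}$ alone. It also carries one factor of $(q^{\frac12}-q^{-\frac12})^{-1}$ where Equation \ref{eq:GV_fhat} needs two. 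That is exactly the power mismatch you flag and leave unresolved.

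The missing idea is Equation \ref{eq:p and P}: $(-1)^{l(\mu)}p_{n\mu}(q^\rho)P_\mu(q^n)=(q^{-\frac n2}-q^{\frac n2})^{-1}$ does not depend on $\mu$.
\begin{enumerate}
\item The map $s_\nu\mapsto(-1)^{|\nu|}s_{\nu^T}(q^\rho)Q^{|\nu|E}$ is a ring homomorphism. So the logarithm of the specialised $Z^{open}_X(V)$ is the specialisation of $\sum_{n}\frac1n\sum_\nu f_\nu(q^n,Q^n)\,\psi_n s_\nu$.
\item Write $\psi_n s_\nu=\sum_\mu\frac{\chi^\nu(\mu)}{z_\mu}p_{n\mu}$, insert Equation \ref{eq:f_by_fhat_P}, and sum over $\nu$ by orthogonality. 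The factor $P_\mu(q^n)$ coming from $M_{\nu\nu'}$ then meets $(-1)^{l(\mu)}p_{n\mu}(q^\rho)$.
\item The $n$-th term collapses to $\frac{-1}{n(q^{n/2}-q^{-n/2})^{2}}\sum_{\nu'}\bigl(\sum_{\mu}\chi^{\nu'}(\mu)/z_\mu\bigr)\widehat f_{\nu'}(q^n,Q^n)Q^{n|\nu'|E}$, and $\sum_\mu\chi^{\nu'}(\mu)/z_\mu=\delta_{\nu',(|\nu'|)}$.
\end{enumerate}
This one cancellation selects the single row, supplies the second factor of $(q^{\frac n2}-q^{-\frac n2})^{-1}$, and produces exactly the $\sum_{k\mid w}\frac1k$ structure of Equation \ref{eq:GV_fhat}. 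The paper performs it termwise in Section \ref{sec:matching}. It disposes of the products generated by the logarithm via the Stirling-number identity in Proposition \ref{prop:stirling}; the ring-homomorphism observation makes that cancellation automatic.

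Finally, the sign $(-1)^g$ is not a framing effect or a $q^{\frac12}\mapsto-q^{\frac12}$ effect. It comes from $q^{\frac12}-q^{-\frac12}=2i\sin\frac{\hbar}{2}$ when the Gopakumar--Vafa formula is rewritten in $q$ (Equation \ref{eq:GV}) and compared with the $(q^{\frac12}-q^{-\frac12})^{2g}$ normalisation of $\widehat f$.
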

In Section \ref{sec:GW}, the Gopakumar-Vafa invariant $n_g(K_{\widehat{X}}, \pi^*\beta - wC)$ is defined by the genus-$g$, closed Gromov-Witten invariant $N_g(K_{\widehat{X}}, \pi^*\beta - wC)$ via the Gopkaumar-Vafa formula, and the LMOV invariant $N_{g, (w)}^{LMOV}(K_X/L, \beta)$ is defined by the open Gromov-Witten invariant $O_g(K_X/L, \beta + w\beta_0 , w)$ of $L \subset K_X$ via the open multiple cover formula of \cite{MV}. The proof of Theorem \ref{thm:oc} is given in Section \ref{sec:proof}. Examples of \cite{GRZZ}, Theorem 5.3, which Theorem \ref{thm:oc} is equivalent to, are given in Section \ref{sec:examples}.

\begin{remark}
    Specialized to $w = 1$, Theorem \ref{thm:oc} is used in \cite{GRZZ} to prove a $g \geq 0$ correspondence between winding-1, open Gromov-Witten invariants of $(K_X, L)$ and 2-pointed, logarithmic Gromov-Witten invariants of $X(\log E)$, where $X(\log E)$ is the log scheme $X$ with divisorial log structure given by a smooth elliptic curve $E$. The latter, logarithmic invariants play an important role in Gross-Siebert mirror symmetry \cite{GS16} \cite{GRZZ}. For $w = 1$, Theorem \ref{thm:oc} is also used in \cite{Zho} to prove a $g \geq 0$ correspondence between winding-1, open Gromov-Witten invariants of $(K_X, L)$ and closed Gromov-Witten invariants of the projectivized canonical bundle $\bP(K_X \oplus \cO_X)$.
\end{remark}

\begin{remark}
    For $g = 0, w = 1$, the LHS of Theorem \ref{thm:oc} is the closed Gromov-Witten invariant $N_0(K_{\widehat{X}}, \pi^*\beta - C)$ defined in Section \ref{sec:GW}, since the curve class is primitive. By \cite{GRZ}, Section 2.2, the RHS of Theorem \ref{thm:oc} is the open Gromov-Witten invariant of a moment torus fiber of $K_X$ defined in \cite{FOOO}. Hence, Theorem \ref{thm:oc} generalizes \cite{LLW}, Theorem 1.1 to $g \geq 0$ and $w \geq 1$.
\end{remark}

\begin{remark}
    Numerical evidence for Theorem \ref{thm:oc} is given in \cite{GRZZ}, Appendix $A$ for $\widehat{X} = \FF_1, X = \bP^2$ in various genus, degree, and winding.
\end{remark}

\subsection{Acknowledgments}

I am grateful to Eric Zaslow for our discussions. I thank the Yau Mathematical Sciences Center, Tsinghua University and the Huiyan Talent Fund for financial support. The connection of Equation \ref{eq:before stirling} to Stirling numbers was obtained with the help of DeepSeek.

\section{Gromov-Witten invariants}

\label{sec:GW}
We define the closed and open Gromov-Witten invariants used in this paper. Let $g \geq 0, w \geq 1$. Let $\beta \in H_2(X, \mathbb{Z})$ be an effective curve class, and $C \in H_2(\widehat{X}, \mathbb{Z})$ be the class of the exceptional curve.

\subsection{Closed Gromov-Witten invariants}
Let $\barM_{g, 0}(K_{\widehat{X}}, \pi^*\beta - wC)$ be the moduli space of genus-$g$, stable maps $f: \Sigma \rightarrow K_{\widehat{X}}$ in curve class $\pi^*\beta - wC \in H_2(K_{\widehat{X}}, \mathbb{Z})$. It has virtual dimension 0. Let $[\barM_{g, 0}(K_{\widehat{X}}, \pi^*\beta - wC)]^{vir}$ be the virtual fundamental class \cite{BF}. Define the closed Gromov-Witten invariant of $K_{\widehat{X}}$,

\[
N_g(K_{\widehat{X}}, \pi^*\beta - wC) := \int_{[\barM_{g, 0}(K_{\widehat{X}}, \pi^*\beta - wC)]^{vir}} 1 \in \mathbb{Q}
\]
In string theory, Gopakumar and Vafa conjectured that the Gromov-Witten invariants of $K_{\widehat{X}}$ can be equivalently expressed in terms of integer invariants $n_g(K_{\widehat{X}}, \pi^*\beta - wC)$, in the following formula,

\begin{conjecture}[Gopakumar-Vafa conjecture, \cite{GV1}, \cite{GV2}]
\label{conj:cmc}
    \begin{equation*}
           \sum_{g, \beta} N_g(K_{\widehat{X}}, \pi^*\beta - wC) \hbar^{2g-2}Q^{\pi^*\beta - wC} = \sum_{\substack{g, \beta \\ k | w, \beta}} \frac{1}{k} n_g(K_{\widehat{X}}, \pi^*\beta - wC) \left(2 \sin \frac{k\hbar}{2}\right)^{2g-2}Q^{\pi^*\beta - wC}     
    \end{equation*}
\end{conjecture}
It is conjectured that $n_g(K_{\widehat{X}}, \pi^*\beta - wC) \in \mathbb{Z}$ and $n_g(K_{\widehat{X}}, \pi^*\beta - wC) = 0$ for $g >> 0$. The Gopakumar-Vafa conjecture has been proven for toric Calabi-Yau 3-folds \cite{Pen} \cite{Kon}. We define the Gopakumar-Vafa invariant $n_g(K_{\widehat{X}}, \pi^*\beta - wC)$ using the Gromov-Witten invariants $N_g(K_{\widehat{X}}, \pi^*\beta - wC)$ and by the above formula. 

\subsection{Open Gromov-Witten invariants}

Let $L \subset K_X$ be an outer Aganagic-Vafa brane \cite{FL}, which is a Lagrangian submanifold diffeomorphic to $S^1 \times \mathbb{R}^2$. Open Gromov-Witten invariants of $(K_X, L)$ are virtual counts of stable maps $f: (\Sigma, \partial \Sigma) \rightarrow (K_X, L)$, where $\Sigma$ is a genus-$g$, Riemann surface with boundary $\partial \Sigma$ and $f(\partial \Sigma) \subset L$. Let $w \in \mathbb{Z}_{\geq 1}$, and $\vec{k} = (k_1, \ldots, k_n) \in \mathbb{Z}_{\geq 0}^n$ be a vector of non-negative integers such that $\sum_{j=1}^n jk_j = w$. Let $|\vec{k}|$ be the number of non-zero entries of $\vec{k}$. Denote by $O_g(K_{X}/L, \beta + w\beta_0, \vec{k})$ to be the genus-$g$, winding-$w$, open Gromov-Witten invariant of an outer Aganagic-Vafa brane $L \subset K_{X}$ in framing-0, curve class $\beta+w\beta_0$ with $\beta_0 \in H_2(K_{X}, L)$, and in winding profile $\vec{k}$. The invariants $O_g(K_{X}/L, \beta+w\beta_0, \vec{k})$ can be defined using stable relative maps, and we refer to \cite{LLLZ} \cite{FL} for a detailed definition of $O_g(K_{X}/L, \beta + w\beta_0, \vec{k})$.
 
In this paper, we work with open Gromov-Witten invariants defined by $|\partial \Sigma| = 1$. Let $O_g(K_X/L, \beta + w\beta_0, w)$ be the genus-$g$, open Gromov-Witten invariant of $L \subset K_X$ in curve class $\beta + w\beta_0$, with 1 boundary component of winding $w$. Let $Q$ be a formal variable tracking the curve class $\beta$, and let $q = e^{i \hbar}$ track the genus $g$. Define the genus-$g$, generating function of $O_g(K_X/L, \beta + w\beta_0, w)$ by,

\[
F_{g, w}(Q) := \sum_{\beta} O_g(K_X/L, \beta + w\beta_0, w)Q^{\beta}
\]

As with closed Gromov-Witten invariants, open Gromov-Witten invariants are also conjectured to have a re-summation formula in terms of integer, open-BPS invariants \cite{MV}. For $n \in \mathbb{N}$, denote by $n_g^{open}(K_{X}/L, \beta + n\beta_0, n)$ to be the open-BPS invariant of $L \subset K_X$ in curve class $\beta+n\beta_0$, with 1 boundary component of winding $n$. Then, $O_g(K_{X}/L, \beta + w\beta_0, w)$ are related to the $n_g^{open}(K_{X}/L, \beta + w\beta_0, w)$ by the open multiple cover formula \cite{MV},

\begin{conjecture}[Open multiple cover formula for $F_{g,w}(Q)$, \cite{MV}]
\label{conj:omc}
\begin{multline*}
    \sum_{g = 0}^{\infty}\hbar^{2g-2+w} F_{g, w}(Q) =\\ \sum_{k | w}(-1)^{g+w} k^{w - 1} n_g^{open}\left(K_X/L, \beta+\frac{w}{k}\beta_0, \frac{w}{k}\right) \left(2 \sin \frac{k\hbar}{2}\right)^{2g-2} \left(2\sin \frac{\hbar}{2}\right)^{w}Q^{k\beta}
\end{multline*}
\end{conjecture}
An open multiple cover formula for more general winding vectors $\vec{k}$ is given in \cite{MV}. The $n^{open}_g(K_X/L, \beta+w\beta_0, w)$ are defined by the $O_g(K_X/L, \beta+w\beta_0, w)$ via the above formula. The open-BPS invariants were proven to satisfy integrality and finiteness properties \cite{Yu}.

\subsection{LMOV invariants}
Denote $N^{\mathrm{LMOV}}_{g, \mu}(K_{X}/L, \beta)$ be the genus-$g$, LMOV invariant of an outer AV-brane $L \subset K_{X}$ in degree $\beta$ and $U(N)$-representation $\mu$ \cite{AKMV}. Define its genus-$g$, degree $\beta$ generating function,

\begin{equation}
\label{eq:def_fhat}
  \widehat{f}_{\nu \varnothing \varnothing}(q, Q) := \sum_{\substack{g \geq 0, \\ \beta \in NE(X)}} N^{\mathrm{LMOV}}_{g, \mu}(K_{X}/L, \beta) (q^{\frac{1}{2}} - q^{\frac{-1}{2}})^{2g}Q^{\beta}  
\end{equation}
We will usually write $\widehat{f}_{\nu\emptyset\emptyset}$ as $\widehat{f}_{\nu}$. Explicit generating series $\widehat{f}_{\mu}$ appear in \cite{AKMV}, Section 7 for various toric Calabi-Yau 3-folds. In this paper, we consider the LMOV invariant $N^{LMOV}_{g, (w)}(K_X/L, \beta)$ in representation $(w)$, i.e. the Young Tableau that is a single row with $w$ boxes.

\subsubsection{Relation of LMOV to open-BPS invariants}
Consider a genus-$g$, open-BPS invariant $n^{open}_g(K_X/L, \beta+w\beta_0, \vec{k})$ in a winding vector $\vec{k}$ with $\sum_j jk_j = w$. Let $\mu$ be the corresponding partition that is equivalent to $\vec{k}$. The LMOV invariant $N^{\mathrm{LMOV}}_{g, \mu}(K_{X}/L, \beta)$ is related to the open-BPS invariant $n^{open}_g(K_X/L, \beta+w\beta_0, \vec{k})$ by the linear transformation, 

\[
n^{open}_g(K_X/L, \beta+w\beta_0, \vec{k}) = \sum_{|\nu| = w}\chi^{\nu}(\mu) N^{LMOV}_{g, \mu}(K_X/L, \beta)
\]
where $\chi^{\nu}(\mu)$ is the character of the symmetric group $S_w$ associated to a representation $\nu$ evaluated on $\mu$. The linear transformation $(\chi^{\nu}(\mu))_{\nu, \mu}$ is invertible over $\mathbb{Q}$ but not $\mathbb{Z}$. We define the LMOV invariants by the open-BPS invariants, or equivalently the open Gromov-Witten invariants, via the inverse transformation.

\section{Preliminaries from the Topological Vertex}

\label{sec:vertex}

We recall definitions from \cite{KM} \cite{ORV} \cite{AKMV} \cite{Mac} \cite{FH}. A partition $\mu = (\mu_1, \mu_2, \ldots)$ is a non-increasing sequence of positive integers. For an integer $k \geq 1$, the partition $k\mu$ is the non-increasing sequence $(k\mu_1, k\mu_2,\ldots)$. Define $|\mu| = \sum_i \mu_i$ and $l(\mu) = |\{i | \mu_i \neq 0\}|$. Notice $l(n\mu) = l(\mu)$ for any integer $n \geq 1$. Define $z_{\mu} := \prod_j \bar{\mu}_j! j^{\bar{\mu}_j}$, where $\bar{\mu}_j$ is the number of entries of $\mu$ of value $j$. Given an integer $n \geq 1$, we write $\mu \dashv n$ to mean a partition $\mu$ with $|\mu| = n$. Denote $\mu^T$ to be the transpose partition of $\mu$. Partitions or Young Tableaux classify representations of $U(N)$ and the symmetric group. A winding vector $\vec{k}$ defined in Section \ref{sec:GW} is equivalently a unique partition, and vice versa. Define $z_{\vec{k}} := \prod_j k_j! j^{k_j}$. We have $z_{\mu} = z_{\vec{k}}$, when $\mu$ is equivalent to $\vec{k}$, and vice versa.

Let $\Lambda \subset \mathbb{Z}[x_1, x_2, \ldots]$ be the ring of symmetric functions in an infinite number of variables $x_i$. Given a partition $\nu$, let $s_{\nu}$ be the Schur function corresponding to $\nu$. Schur functions form a $\mathbb{Z}$-basis of $\Lambda$. The Adams operations $\psi_n: \Lambda \rightarrow \Lambda$ are defined by $x_i \mapsto x_i^n$ for all $i$. For $k \geq 1$, define the $k$-th power sum $p_k := \sum_{i \geq 1} x_i^k$. The $\{p_k\}_{k \geq 1}$ form a $\mathbb{Q}$-basis of $\Lambda$. For a partition $\mu$, define $p_{\mu} := p_{\mu_1}\ldots p_{\mu_{|l(\mu)|}}$. Define $q^{\rho} := (q^{\frac{-1}{2}}, q^{\frac{-3}{2}}, q^{\frac{-5}{2}}, \ldots) = (q^{\frac{-2i+1}{2}})_{i \geq 1}$. We have $p_{\mu}(q^{\rho}) = \prod_{\mu_j \in \mu} \left(q^{\frac{\mu_j}{2}} - q^{\frac{-\mu_j}{2}}\right)^{-1}$. If $\mu$ is a partition, $q^{\rho + \mu}$ is the sequence $(q^{\frac{-2i+1}{2} + \mu_i})_{i \geq 1}$. For more detailed definitions in symmetric function theory, we refer to \cite{Mac}.

Let $S_n$ be the symmetric group of $n$ letters. Partitions of $n$ correspond to conjugacy classes of $S_n$. Let $\chi^{\nu}(\mu)$ be the character of $S_n$ associated to a $S_n$-representation $\nu$, and evaluated on a partition $\mu$. We have the relation $\chi^{\nu^T}(\mu)= (-1)^{|\mu| - l(\mu)}\chi^{\nu}(\mu)$. A formula to compute $\chi^{R}(\mu)$ is given in \cite{FH}, Formula 4.10. For a natural number $w \in \mathbb{N}$, we write $(w)$ to denote the Young Tableau that is a single row with $w$ boxes. Note that $\chi^{(w)}(\mu) \equiv 1$ for all $w \in \mathbb{N}$.

The change of basis formula between Schur functions and power sums is,

\begin{equation}
\label{eq:basis change}
\begin{split}
     s_{\nu} &= \sum_{\mu \dashv |\nu|} \frac{\chi^{\nu}(\mu)}{z_{\mu}}p_{\mu} \\
    p_{\mu} &= \sum_{\nu \dashv |\mu|}\chi^{\nu}(\mu)s_{\nu}     
\end{split}
\end{equation}

Given two Schur functions $s_{\mu}, s_{\nu}$, the Littlewood-Richardson (LR) coefficients $c_{\mu \nu}^{\lambda}$ are the structure constants of the product $s_{\mu} \cdot s_{\nu}$,

\begin{equation}
\label{eq:LR_coefficients}
 s_\mu \cdot s_{\nu} = \sum_{\lambda}c_{\mu \nu}^{\lambda}s_{\lambda} 
\end{equation}
If $c_{\mu \nu}^{\lambda} \neq 0$, then $|\lambda| = |\mu| + |\nu|$ with $\mu, \nu \subset \lambda$. LR-coefficients have an enumerative meaning as counts of lattice permutations, and satisfy $c_{\mu \nu}^{\lambda} \geq 0$. They satisfy $c_{\mu \nu}^{\lambda} = c_{\mu^T \nu^T}^{\lambda^T}$.

The Topological Vertex is an algorithm that leverages large $N$-duality to compute higher genus open and closed Gromov-Witten invariants of toric Calabi-Yau 3-folds \cite{AKMV}. We use the definition of the Topological Vertex $C_{\lambda \mu \nu}$ from \cite{KM} \cite{ORV}.

\begin{definition}[\cite{KM} \cite{ORV}]
  \label{def:vertex}
  \[
 C_{\lambda \mu  \nu} := q^{\frac{\kappa(\nu)}{2}}s_{\mu^T}(q^{\rho})\sum_{\eta} s_{\lambda/\eta}(q^{\mu^T+\rho}) s_{\nu^T/\eta}(q^{\mu + \rho})   
  \]
\end{definition}
Consider the 1-legged vertex $C_{\lambda \mu \nu}$ with $\mu = \nu = \varnothing$. Since $s_{\emptyset} = 1$, and $s_{R_1/R_2} = 0$ if $|R_2| > |R_1|$, this implies,

\[
C_{\lambda \varnothing \varnothing} = s_{\lambda}(q^{\rho})
\]
as the only contributing partition on the RHS of Definition \ref{def:vertex} will be $\eta = \emptyset$. 

We will write $C_{\lambda \emptyset \emptyset}$ as $C_{\lambda}$. By \cite{AKMV} \cite{LLLZ}, $C_{\lambda}$ is the partition  function of open string amplitudes of an outer Aganagic-Vafa brane in $\mathbb{C}^3$

\subsection{Relating \texorpdfstring{$\widehat{f}_{\nu}$}{fnu} to \texorpdfstring{$f_{\nu}$}{fnu}}

\label{sec:fnu to hatfnu}

We recall the definition of another generating function $f_{\nu}$ that is related to $\widehat{f}_{\nu}$ by linear transformation \cite{AKMV}. Let $n \geq 1$,

\[
f_{\nu}(q^{n}, Q^{n}) := \left(q^{\frac{n}{2}} - q^{\frac{-n}{2}}\right)^{-1}\sum_{\nu'} M_{\nu \nu'}(q^{n}) \widehat{f}_{\nu'}(q^{n}, Q^{n})
\]
where

\[
M_{\nu \nu'}(q^{n}) := \sum_{\nu''}C_{\nu \nu' \nu''}S_{\nu''}(q^{n})
\]
and $C_{\nu \nu' \nu''} := \sum_{\mu \dashv |\nu|} \frac{\chi^{\nu}(\mu)\chi^{\nu'}(\mu)\chi^{\nu''}(\mu)}{z_{\mu}}$ are the Clebsch-Gordon coefficients for the representations $\nu, \nu', \nu''$ \cite{FH}. In the definition of $C_{\nu \nu' \nu''}$, we have $|\nu| = |\nu'| = |\nu''|$. For a representation $\nu''$, define $S_{\nu''}(q) := (-1)^d q^{\frac{-l+1}{2} + d}$, if $\nu''$ is a hook representation (\cite{AKMV}, Section 7.3) with $l$ total boxes and $l-d$ boxes in the first row, and 0 if $\nu''$ is not a hook representation. Hence,

\begin{equation}
    \label{eq:f_by_fhat}
    f_{\nu}(q^{n}, Q^{n}) = \left(q^{\frac{n}{2}} - q^{\frac{-n}{2}}\right)^{-1}\sum_{\nu', \nu'', \mu \dashv |\nu|} \frac{\chi^{\nu}(\mu)\chi^{\nu'}(\mu)\chi^{\nu''}(\mu)}{z_{\mu}}S_{\nu''}(q^{n}) \widehat{f}_{\nu'}\left(q^{n}, Q^{n}\right)
\end{equation}
For a winding vector $\vec{k}$ with equivalent partition $\mu$, define,

\begin{align*}
    P_{\vec{k}}(q) &:= \frac{\prod_j \left(q^{\frac{-j}{2}} - q^{\frac{j}{2}}\right)^{k_j}}{q^{\frac{-1}{2}} - q^{\frac{1}{2}}} \\
    &= \frac{\prod_{\mu_j \in \mu} \left(q^{\frac{-\mu_j}{2}} - q^{\frac{\mu_j}{2}}\right)}{q^{\frac{-1}{2}} - q^{\frac{1}{2}}} =: P_{\mu}(q)
\end{align*}
Let $n \geq 1$ and $\mu$ be a partition. We have the following relation between $P_{\mu}(q^n)$ and the power sum $p_{n\mu}(q^{\rho})$. By definition, $p_{n \mu}(q^{\rho}) = \prod_{\mu_j \in \mu}\left(q^{\frac{n \mu_j}{2}} -q^{\frac{-n \mu_j}{2}}\right)^{-1}$, $P_{\mu}(q^{n}) = \left(q^{\frac{-n}{2}} - q^{\frac{n}{2}}\right)^{-1}\prod_{\mu_j \in \mu}\left(q^{\frac{-n \mu_j}{2}} - q^{\frac{n \mu_j}{2}}\right)$. Hence, we have,

\begin{equation}
\label{eq:p and P}
    (-1)^{l(\mu)} p_{n \mu}(q^{\rho})P_{\mu}(q^{n}) = \left(q^{\frac{-n}{2}} - q^{\frac{n}{2}}\right)^{-1}
\end{equation} 
We have the following relation by \cite{LMV}, Equation 3.13,

\begin{equation}
\label{eq:P to S}
    P_{\vec{k}}(q) = P_{\mu}(q) = \sum_{\nu \dashv |\mu|}\chi^{\nu}(\mu)S_{\nu}(q)
\end{equation}
Hence, Equation \ref{eq:f_by_fhat} becomes, 

\begin{equation}
    \label{eq:f_by_fhat_P}
    f_{\nu}(q^{n}, Q^{n}) = \left(q^{\frac{n}{2}} - q^{\frac{-n}{2}}\right)^{-1}\sum_{\nu', \mu \dashv |\nu|} \frac{\chi^{\nu}(\mu)\chi^{\nu'}(\mu)}{z_{\mu}}P_{\mu}(q^n) \widehat{f}_{\nu'}\left(q^{n}, Q^{n}\right)
\end{equation}

\subsection{Open string partition function}

Recall that $K_X$ is a toric Calabi-Yau 3-fold, and let $L \subset K_X$ be an outer Aganagic-Vafa brane \cite{FL}. Let $V$ be an $U(N)$-matrix with $N$ suitably large, and let $x_i$ be its eigenvalues. Define $Tr_{\nu}V$ to be the trace of $V$ in a $U(N)$-representation $\nu$. The open string partition function $Z^{open}_X(V)$ of $L \subset K_X$ can be computed by the Topological Vertex \cite{AKMV}, and is given by, 

\begin{equation}
\label{eq:Zopen}
\begin{split}
    Z^{open}_X(V) &:= \mathrm{Exp}\left(\sum_\nu f_{\nu}(q, Q) Tr_\nu V\right) \\
    &= 1 + \sum_{d=1}^{\infty}\frac{1}{d!}\left(\sum_{n=1}^{\infty}\frac{1}{n}\sum_{\nu} f_\nu(q^n, Q^n)Tr_\nu (V^n)\right)^d   
\end{split}
\end{equation}
where $Exp$ is the plethyistic exponential. We have $Tr_{\nu} V = s_{\nu}(x_i)$,  and $Tr_{\nu}(V^n) = \psi_n \circ s_{\nu}(x_i) = s_{\nu}(x_i^n)$. 

The change-of-basis formulas in Equation \ref{eq:basis change} give,

\[
Tr_{\nu}(V^n) = \psi_n \circ s_\nu = \sum_{\substack{\mu \dashv |\nu| \\ \alpha' \dashv n|\nu|}} \frac{\chi^{\nu}(\mu)\chi^{\alpha'}(n \mu) }{z_{\mu}}s_{\alpha'}
\]
Hence, for $d \geq 1$,

\[
(Tr_{\nu}(V^n))^d = \sum_{\substack{\mu_i \dashv |\nu| \\ \alpha'_i \dashv n|\nu| \\ 1 \leq i \leq d}}
\prod_{i=1}^d \left(\frac{\chi^{\nu}(\mu_i)\chi^{\alpha'_i}(n\mu_i)}{z_{\mu_i}}s_{\alpha'_i}\right)
\]
This implies $Z_X^{open}(V)$ is,

\begin{equation}
\label{eq:Zopen_expanded}   
\begin{split}
Z_X^{open}(V) = 1 + \sum_{d \geq 1}\frac{1}{d!}\sum_{\substack{n_i \\ 1 \leq i \leq d}}\sum_{\substack{\nu_i \\ 1 \leq i \leq d}}\left(\prod_{i=1}^d\frac{1}{n_i}f_{\nu_i}(q^{n_i}, y^{n_i})\right)\\
\left[\sum_{\substack{\mu_i \dashv |\nu_i| \\ 1 \leq i \leq d}} \sum_{\substack{\alpha'_i \dashv n_i |\nu_i| \\ 1 \leq i \leq d}}\prod_{i=1}^d\left(\frac{\chi^{\nu_i}(\mu_i)\chi^{\alpha'_i}(n_i\mu_i)}{z_{\mu_i}}s_{\alpha'_i}\right)\right] 
\end{split} 
\end{equation}

Since Schur functions form a basis of $\Lambda$, we write $Z^{open}_X(V) = \sum_\nu Z_\nu Tr_\nu V$ and define $Z_{\nu}$ as the coefficient of $Tr_{\nu} V$. 

\subsection{Finding \texorpdfstring{$Z_{\nu}$}{Znu}} 

Given a representation $\nu$, we find $Z_{\nu}$ in $Z_X^{open}(V)$. We find the contribution to $Z_{\nu}$ from $d =1$ and $d \geq 2$ in $Z_X^{open}(V)$. We write,

\begin{equation}
\label{eq:split_Znu}
  Z_{\nu} := Z_{\nu}^{d=1} + Z_{\nu}^{d \geq 2}  
\end{equation}

\subsubsection{\texorpdfstring{$d = 1$}{d=1}}
Restricting to $d = 1$, we have,

\begin{align*}
    Z^{open}_X(V)|_{d=1} &= 1 + \sum_{\substack{\nu_1 \\ n_1 \geq 1}}\frac{1}{n_1}f_{\nu_1}(q^{n_1}, Q^{n_1}) \left(\sum_{\substack{\mu_1 \dashv |\nu_1| \\ \alpha'_1 \dashv n_1 |\nu_1|}}\frac{\chi^{\nu_1}(\mu_1)\chi^{\alpha'_1}(n_1 \mu_1)}{z_{\mu_1}}s_{\alpha'_1}\right)
\end{align*}
By setting $\alpha'_1 = \nu$, and taking the coefficient of $s_{\nu}$, we have,

\begin{equation}
    \label{eq:Znu_from_d=1}
    Z_{\nu}^{d = 1} = \sum_{\substack{\nu_1 \\ n_1 \geq 1}}\frac{1}{n_1}f_{\nu_1}(q^{n_1}, Q^{n_1})\sum_{\mu_1 \dashv |\nu_1|}\frac{\chi^{\nu_1}(\mu_1)\chi^{\nu}(n_1 \mu_1)}{z_{\mu_1}}
\end{equation}
By Equation \ref{eq:f_by_fhat_P},

\begin{equation}
    \label{eq:Znu_d=1_fhat}
    \begin{split}
        Z_{\nu}^{d = 1} = \sum_{\substack{\nu_1 \\ n_1 \geq 1}}\left[\frac{1}{n_1\left(q^{\frac{n_1}{2}} - q^{\frac{-n_1}{2}}\right)}\left(\sum_{\substack{\nu'_1, |\nu'_1| = |\nu_1| \\ \mu_1, \tilde{\mu}_1 \dashv |\nu_1|}}\frac{\chi^{\nu_1}(\tilde{\mu}_1)\chi^{\nu'_1}(\tilde{\mu}_1)\chi^{\nu_1}(\mu_1)\chi^{\nu}(n_1 \mu_1)}{z_{\mu_1}z_{\tilde{\mu}_1}}P_{\tilde{\mu}_1}(q^{n_1})\right.\right.\\
        \left.\left.\widehat{f}_{\nu'_1}(q^{n_1}, Q^{n_1})\right)\right]
    \end{split}
\end{equation}
Switching sums, we have,

\begin{equation}
    \begin{split}
        Z_{\nu}^{d = 1} = \sum_{n_1 \geq 1}\left[\frac{1}{n_1\left(q^{\frac{n_1}{2}} - q^{\frac{-n_1}{2}}\right)}\left(\sum_{\substack{\nu'_1\\ \mu_1, \tilde{\mu}_1 \dashv |\nu'_1|}}\sum_{\nu_1, |\nu_1| = |\nu'_1|}\frac{\chi^{\nu_1}(\tilde{\mu}_1)\chi^{\nu'_1}(\tilde{\mu}_1)\chi^{\nu_1}(\mu_1)\chi^{\nu}(n_1 \mu_1)}{z_{\mu_1}z_{\tilde{\mu}_1}}\right.\right.\\
        \left.\left.P_{\tilde{\mu}_1}(q^{n_1})\widehat{f}_{\nu'_1}(q^{n_1}, Q^{n_1})\right)\right]\
    \end{split}
\end{equation}
By orthogonality of rows of the character table of the symmetric group, \\$\sum_{\nu_1} \frac{\chi^{\nu_1}(\tilde{\mu_1})\chi^{\nu_1}(\mu_1)}{z_{\mu_1}z_{\tilde{\mu_1}}} = \frac{1}{z_{\mu_1}}$ if $\mu_1 = \tilde{\mu_1}$, and 0 otherwise. Hence, the above becomes, 

\begin{equation}
\label{eq:Znu_d1_simplified}
    \begin{split}
        Z_{\nu}^{d = 1} = \sum_{n_1 \geq 1}\left[\frac{1}{n_1\left(q^{\frac{n_1}{2}} - q^{\frac{-n_1}{2}}\right)}\left(\sum_{\substack{\nu'_1\\ \mu_1 \dashv |\nu'_1|}}\frac{\chi^{\nu'_1}(\mu_1)\chi^{\nu}(n_1 \mu_1)}{z_{\mu_1}}P_{\mu_1}(q^{n_1})\widehat{f}_{\nu'_1}(q^{n_1}, Q^{n_1})\right)\right]\
    \end{split}
\end{equation}

\subsubsection{\texorpdfstring{$d \geq 2$}{d geq 2}}

When $d \geq 2$, we write,

\begin{equation}
\label{eq:prod_expression}
   s_{\alpha'_1}\ldots s_{\alpha'_d} = \sum_{\alpha'_{12}}c^{\alpha'_{12}}_{\alpha'_1 \alpha'_2}\sum_{\alpha'_{123}}c^{\alpha'_{123}}_{\alpha'_{12} \alpha'_3}\ldots \sum_{\alpha'_{12\ldots d}}c^{\alpha'_{12\ldots d}}_{\alpha'_{12\ldots d-1} \alpha'_d}s_{\alpha'_{12\ldots d}} 
\end{equation}
where $c^{\alpha'_{12\ldots d}}_{\alpha'_{12\ldots d-1} \alpha'_d}$ are the Littlewood-Richardson coefficients of the product $s_{\alpha'_{12\ldots d-1}}s_{\alpha'_d}$ for $d \geq 2$. Hence,

\begin{equation}
\label{eq:Znu_def}   
\begin{split}
    Z_X^{open}(V)|_{d \geq 2} = \sum_{d \geq 2}\frac{1}{d!}\sum_{\substack{n_i \geq 1 \\ 1 \leq i \leq d}}\sum_{\substack{\nu_i \\ 1 \leq i \leq d}} \left(\prod_{i=1}^d \frac{1}{n_i}f_{\nu_i}(q^{n_i}, Q^{n_i})\right)\\
    \left[\sum_{\substack{\mu_i \dashv |\nu_i| \\ 1 \leq i \leq d}} \sum_{\substack{\alpha'_i \dashv n_i|\nu_i| \\ 1 \leq i \leq d}}\prod_{i=1}^d\left(\frac{\chi^{\nu_i}(\mu_i)\chi^{\alpha'_i}(n_i\mu_i)}{z_{\mu_i}}\right)
    \left(\sum_{\substack{\alpha'_{12},\\ \ldots, \\ \alpha'_{12\ldots d}}}c^{\alpha'_{12}}_{\alpha'_1 \alpha'_2}\ldots c^{\alpha'_{12\ldots d}}_{\alpha'_{12\ldots d-1} \alpha'_d}s_{\alpha'_{12\ldots d}}\right) \right]
\end{split} 
\end{equation}

\begin{remark}
If the coefficient of $s_{\alpha'_{12\ldots d}}$ is non-zero, then properties of the Littlewood-Richardson coefficients require that $\alpha_i' \subset \alpha_{12\ldots d}'$ for all $i \leq d$, and $|\alpha_1'| + \ldots |\alpha_d'| = |\alpha_{12\ldots d}'|$ for each $d \geq 2$. Since $|\alpha_i'| = n_i |\nu_i|$, we have $|\alpha_{12\ldots d}'| = \sum_{i=1}^d n_i |\nu_i|$. 
\end{remark}
By setting $\alpha'_{12\ldots d} = \nu$, and taking the coefficient of $s_{\nu}$, we have,

\begin{equation}
\label{eq:Znu_dgeq2}   
\begin{split}
    Z^{d \geq 2}_{\nu} = \sum_{d \geq 2}\frac{1}{d!}\sum_{\substack{n_i \geq 1 \\ 1 \leq i \leq d}}\sum_{\substack{\nu_i \\ 1 \leq i \leq d}} \left(\prod_{i=1}^d \frac{1}{n_i}f_{\nu_i}(q^{n_i}, Q^{n_i})\right)\left[\sum_{\substack{\mu_i \dashv |\nu_i| \\ 1 \leq i \leq d}} \sum_{\substack{\alpha'_i \dashv n_i|\nu_i| \\ 1 \leq i \leq d}}\prod_{i=1}^d\left(\frac{\chi^{\nu_i}(\mu_i)\chi^{\alpha'_i}(n_i\mu_i)}{z_{\mu_i}}\right) \right.\\
    \left.\left(\sum_{\substack{\alpha'_{12},\\ \ldots, \\ \alpha'_{12\ldots d-1}}}c^{\alpha'_{12}}_{\alpha'_1 \alpha'_2}\ldots c^{\nu}_{\alpha'_{12\ldots d-1} \alpha'_d}\right)\right]
\end{split} 
\end{equation}
By Equation \ref{eq:f_by_fhat_P}, Equation \ref{eq:Znu_dgeq2} becomes

\begin{equation}
\label{eq:Znu_dgeq2_fhat_P}
\begin{split}
    Z^{d \geq 2}_{\nu} = \sum_{d \geq 2}\frac{1}{d!}\sum_{\substack{n_i \geq 1 \\ 1 \leq i \leq d}}\sum_{\substack{\nu_i \\ 1 \leq i \leq d}} \left[\sum_{\substack{\mu_i \dashv |\nu_i| \\ 1 \leq i \leq d}} \sum_{\substack{\alpha'_i \dashv n_i|\nu_i| \\ 1 \leq i \leq d}}\prod_{i=1}^d\left(\frac{\chi^{\nu_i}(\mu_i)\chi^{\alpha'_i}(n_i\mu_i)}{z_{\mu_i}}\right)\right. \\
    \left. \left(\sum_{\substack{\alpha'_{12},\\ \ldots, \\ \alpha'_{12\ldots d-1}}}c^{\alpha'_{12}}_{\alpha'_1 \alpha'_2}\ldots c^{\nu}_{\alpha'_{12\ldots d-1} \alpha'_d}\right)\right]\left(\prod_{i=1}^d \frac{1}{n_i\left(q^{\frac{n_i}{2}} - q^{\frac{-n_i}{2}}\right)}\right) \\
    \left[\sum_{\substack{\nu'_i, |\nu'_i| = |\nu_i| \\ 1 \leq i \leq d}}\left[\prod_{i=1}^d\sum_{\tilde{\mu}_i \dashv |\nu_i|}\frac{\chi^{\nu_i}(\tilde{\mu}_i)\chi^{\nu'_i}(\tilde{\mu}_i)}{z_{\tilde{\mu}_i}}P_{\tilde{\mu_i}}(q^{n_i})\right]\left(\prod_{i=1}^d\widehat{f}_{\nu'_i}(q^{n_i}, Q^{n_i})\right)\right]
\end{split}
\end{equation}
Switching the sum over $\nu'_i$ with the product over $i$, 

\begin{equation}
\label{eq:Znu_dgeq2_fhat_P_switch}
\begin{split}
    Z^{d \geq 2}_{\nu} = \sum_{d \geq 2}\frac{1}{d!}\sum_{\substack{n_i \geq 1 \\ 1 \leq i \leq d}}\left(\prod_{i=1}^d \frac{1}{n_i\left(q^{\frac{n_i}{2}} - q^{\frac{-n_i}{2}}\right)}\right) \sum_{\substack{\nu_i \\ 1 \leq i \leq d}} \\ 
    \left[\sum_{\substack{\mu_i \dashv |\nu_i| \\ 1 \leq i \leq d}} \sum_{\substack{\alpha'_i \dashv n_i|\nu_i| \\ 1 \leq i \leq d}} \prod_{i=1}^d\left(\frac{\chi^{\nu_i}(\mu_i)\chi^{\alpha'_i}(n_i\mu_i)}{z_{\mu_i}}\right)
    \left(\sum_{\substack{\alpha'_{12},\\ \ldots, \\ \alpha'_{12\ldots d-1}}}c^{\alpha'_{12}}_{\alpha'_1 \alpha'_2}\ldots c^{\nu}_{\alpha'_{12\ldots d-1} \alpha'_d}\right)\right] \\
    \left[\prod_{i=1}^d\sum_{\nu'_i, |\nu'_i| = |\nu_i|}\sum_{\tilde{\mu}_i \dashv |\nu_i|}\frac{\chi^{\nu_i}(\tilde{\mu}_i)\chi^{\nu'_i}(\tilde{\mu}_i)}{z_{\tilde{\mu}_i}}P_{\tilde{\mu}_i}(q^{n_i})\widehat{f}_{\nu'_i}(q^{n_i}, Q^{n_i})\right]
\end{split}
\end{equation}
Move the sums over $\nu'_i$ into the sums over $\mu_i$, and take $\frac{\chi^{\nu}(\mu_i)}{z_{\mu_i}}$ out. Switch the sums between $\nu_i, \nu'_i$, and $\mu_i$, under the requirement that $|\nu_i| = |\nu'_i|$, to get,

\begin{equation}
\begin{split}
    Z^{d \geq 2}_{\nu} = \sum_{d \geq 2}\frac{1}{d!}\sum_{\substack{n_i \geq 1 \\ 1 \leq i \leq d}}\left(\prod_{i=1}^d \frac{1}{n_i\left(q^{\frac{n_i}{2}} - q^{\frac{-n_i}{2}}\right)}\right)  \\ 
    \left[\sum_{\nu'_1}\sum_{\mu_1 \dashv |\nu'_1|} \sum_{\tilde{\mu}_1 \dashv |\nu'_1|}\sum_{\nu_1, |\nu_1| = |\nu'_1|}\frac{\chi^{\nu_1}(\mu_i)\chi^{\nu_1}(\tilde{\mu}_1)\chi^{\nu'_1}(\tilde{\mu}_1)}{z_{\tilde{\mu}_1}z_{\mu_1}}P_{\tilde{\mu}_1}(q^{n_1})\widehat{f}_{\nu'_1}(q^{n_1}, Q^{n_1}) \ldots \right.\\
    \sum_{\nu'_d}\sum_{\mu_d \dashv |\nu'_d|} \sum_{\tilde{\mu}_d \dashv |\nu'_d|}\sum_{\nu_d, |\nu_d| = |\nu'_d|}\frac{\chi^{\nu_d}(\mu_d)\chi^{\nu_d}(\tilde{\mu}_d)\chi^{\nu'_d}(\tilde{\mu}_d)}{z_{\tilde{\mu}_d}z_{\mu_d}}P_{\tilde{\mu}_d}(q^{n_d})\widehat{f}_{\nu'_d}(q^{n_d}, Q^{n_d}) \\
    \left.\left[\sum_{\substack{\alpha'_i \dashv n_i|\nu'_i| \\ 1 \leq i \leq d}} \left(\prod_{i=1}^d\chi^{\alpha'_i}(n_i\mu_i)\right)
    \left(\sum_{\substack{\alpha'_{12},\\ \ldots, \\ \alpha'_{12\ldots d-1}}}c^{\alpha'_{12}}_{\alpha'_1 \alpha'_2}\ldots c^{\nu}_{\alpha'_{12\ldots d-1} \alpha'_d}\right)\right]\right]
\end{split}
\end{equation}
By orthogonality of rows of the character table of the symmetric group, for all $1 \leq i \leq d$, we have $\sum_{\nu_i, |\nu_i| = |\nu'_i|} \frac{\chi^{\nu_i}(\tilde{\mu_i})\chi^{\nu_i}(\mu_i)}{z_{\mu_i}z_{\tilde{\mu_i}}} = \frac{1}{z_{\mu_i}}$ if $\mu_i = \tilde{\mu_i}$, and 0 otherwise. Hence, the above becomes, 

\begin{equation}
\label{eq:Znu_dgeq2_simplified}
\begin{split}
    Z^{d \geq 2}_{\nu} = \sum_{d \geq 2}\frac{1}{d!}\sum_{\substack{n_i \geq 1 \\ 1 \leq i \leq d}}\left(\prod_{i=1}^d \frac{1}{n_i\left(q^{\frac{n_i}{2}} - q^{\frac{-n_i}{2}}\right)}\right)  \\ 
    \left[\sum_{\nu'_1}\sum_{\mu_1 \dashv |\nu'_1|} \frac{\chi^{\nu'_1}(\mu_1)}{z_{\mu_1}}P_{\mu_1}(q^{n_1})\widehat{f}_{\nu'_1}(q^{n_1}, Q^{n_1}) \ldots \right.\\
    \sum_{\nu'_d}\sum_{\mu_d \dashv |\nu'_d|} \frac{\chi^{\nu'_d}(\mu_d)}{z_{\mu_d}}P_{\mu_d}(q^{n_d})\widehat{f}_{\nu'_d}(q^{n_d}, Q^{n_d}) \\
    \left.\left[\sum_{\substack{\alpha'_i \dashv n_i|\nu'_i| \\ 1 \leq i \leq d}} \left(\prod_{i=1}^d\chi^{\alpha'_i}(n_i\mu_i)\right)
    \left(\sum_{\substack{\alpha'_{12},\\ \ldots, \\ \alpha'_{12\ldots d-1}}}c^{\alpha'_{12}}_{\alpha'_1 \alpha'_2}\ldots c^{\nu}_{\alpha'_{12\ldots d-1} \alpha'_d}\right)\right]\right]
\end{split}
\end{equation}

\section{An equivalent open-closed statement}

Let $W$ be the flop of $K_{\widehat{X}}$ along $C$, and let $E \in H_2(W, \mathbb{Z})$ be the flopped curve. We recall, 

\begin{theorem}[\cite{GRZZ}, Theorem  5.3]
\label{thm:GRZZ}
\[
      \sum_{\substack{g \geq 0, \\ \beta \in NE(X), \\ w \in \mathbb{Z}_{>0}}}N_g(K_{\widehat{X}}, \pi^*\beta - wC)\hbar^{2g-2}Q^{\pi^*\beta - wC} = \sum_{k \geq 1}\frac{(-1)^{k+1}}{k} \left(\sum_{\nu \neq \emptyset} (-1)^{|\nu|}Z_\nu C_{\nu^T \varnothing\varnothing}Q^{|\nu|E}\right)^k  
\]
\end{theorem}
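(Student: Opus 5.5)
This identity is cited from \cite{GRZZ}. To prove it I would combine the flop invariance of Gromov-Witten invariants with a Topological Vertex gluing computation on the flopped geometry $W$.

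\textbf{Step 1: pass to $W$.} Under the flop of $K_{\widehat{X}}$ along the $(-1,-1)$ curve $C$, the class $\pi^*\beta - wC$ corresponds to a class of the form $\beta' + wE$ on $W$, where $\beta'$ is the image of $\pi^*\beta$. By flop invariance of genus-$g$ Gromov-Witten invariants of toric Calabi-Yau 3-folds (Li-Ruan, Konishi-Minabe), the generating series on the left equals the series of $N_g(W,\beta'+wE)$, summed over classes with positive $E$-degree. The sign conventions in $Q^{|\nu|E}$ would be fixed by this identification.

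\textbf{Step 2: describe the toric web of $W$.} The toric diagram of $W$ is that of $K_X$ with one extra trivalent vertex. This vertex is a $\mathbb{C}^3$ chart, joined to the vertex of $K_X$ at $p$ by the internal edge $E$, and carries two non-compact legs. The edge $E$ sits exactly where the outer Aganagic-Vafa brane $L$ sits on the corresponding outer leg of $K_X$.

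\textbf{Step 3: glue with the Topological Vertex.} Cutting along $E$, the closed partition function is
\[
Z(W)=\sum_{\nu} Z_\nu \,(-1)^{|\nu|} q^{(\text{framing})\kappa(\nu)/2}\, C_{\nu^T\varnothing\varnothing}\, Q^{|\nu|E}.
\]
Here $Z_\nu$ is the coefficient of $Tr_\nu V$ in $Z^{open}_X(V)$ from Equation \ref{eq:Zopen}, and $C_{\nu^T\varnothing\varnothing}=s_{\nu^T}(q^\rho)$ is the one-legged vertex of the new $\mathbb{C}^3$ chart. I would check that for a $(-1,-1)$ edge in the framing-$0$ convention used for $L$, the framing factor is trivial. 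I expect this to be the main obstacle, since it requires matching the AKMV and \cite{LLLZ} framing and orientation conventions for the brane with the gluing rule for the edge.

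\textbf{Step 4: take logarithms and remove the $E$-degree-zero part.} Write $Z(W)=Z(K_X)\cdot\bigl(1+\sum_{\nu\neq\emptyset}(-1)^{|\nu|}Z_\nu C_{\nu^T}Q^{|\nu|E}\bigr)$, after normalizing $Z_\varnothing$, which is the closed partition function of $K_X$. Taking $\log$ and expanding $\log(1+x)=\sum_k(-1)^{k+1}x^k/k$ gives the right-hand side. The left-hand side is $\log Z(W)-\log Z(K_X)$ restricted to positive $E$-degree, that is, the free energy $\sum N_g\hbar^{2g-2}Q^{\cdot}$ over those classes.

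Finally I would justify the convergence and the formal-variable bookkeeping as formal series in $Q$. I would also confirm that unstable terms do not occur in positive $E$-degree, and that classes $\pi^*\beta-wC$ with $\beta=0$ contribute only $C$-multiple covers. These are matched by the $\nu$-sum with $Z_\nu$ in $Q^0$.
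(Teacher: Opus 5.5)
Your proposal follows the same route the paper attributes to \cite{GRZZ}: flop invariance of the Topological Vertex \cite{KM} to replace $\pi^*\beta-wC$ on $K_{\widehat{X}}$ by $\beta+wE$ on $W$, then the \cite{LLLZ} gluing of the one-legged vertex $C_{\nu^T\varnothing\varnothing}$ along the $(-1,-1)$ edge $E$ with trivial framing factor, and finally taking the logarithm. The one point to state more carefully is $\beta=0$: $-wC$ is not effective on $K_{\widehat{X}}$, so the $Q^{0}$ part of the right-hand side (the conifold free energy of $E$) does not come from any $C$-multiple-cover contribution on $K_{\widehat{X}}$, and it matches only through the flop identification of the coefficient of $Q^{-wC}$ with that of $Q^{wE}$, which is how the paper reads the left-hand side.
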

Theorem \ref{thm:GRZZ} equates the generating function of Gromov-Witten invariants of $K_{\widehat{X}}$ in curve class $\pi^*\beta - wC$ with the generating function of Gromov-Witten invariants of $W$ in curve class $\beta + wE$ by flop invariance of the Topological Vertex \cite{KM}, and applies the degeneration formula of the Topological Vertex \cite{LLLZ}. By \cite{KM}, for a fixed $w \in \mathbb{Z}_{>0}$, the coefficient of $Q^{-wC}$ is equal to the coefficient of $Q^{wE}$.

Fix a winding $w \geq 1$ in Theorem \ref{thm:GRZZ}. By the Gopakumar-Vafa formula, the LHS becomes,

\begin{equation}
\label{eq:GV}
\begin{split}
\sum_{\substack{g, \beta\\ k \geq 1\\ k| w, \beta}}\frac{(-1)^{g-1}}{k}n_g\left(K_{\widehat{X}}, \frac{\pi^*\beta - wC}{k}\right)(q^{\frac{k}{2}} - q^{\frac{-k}{2}})^{2g-2}Q^{\pi^*\beta - wC} 
\end{split}
\end{equation}
For a fixed $k | w$, recall the generating function of LMOV invariants in representation $(\frac{w}{k})$ from Section \ref{sec:GW},

\[
\widehat{f}_{(\frac{w}{k})}(q, Q) = \sum_{g, \beta}N_{g, (\frac{w}{k})}^{LMOV}\left(K_X/L, \frac{\beta}{k}\right)(q^{\frac{1}{2}} - q^{\frac{-1}{2}})^{2g-2}Q^{\frac{\beta}{k}}
\]
Cancel $Q^{-wC}$ with $Q^{wE}$ in Theorem \ref{thm:GRZZ} by flop invariance \cite{KM}. Then, substitute Theorem \ref{thm:oc} into Equation \ref{eq:GV}, which becomes,

\begin{equation}
\label{eq:GV_fhat}
 \sum_{\substack{k \geq 1\\ k| w, \beta}}\frac{-1}{k\left(q^{\frac{k}{2}} - q^{\frac{-k}{2}}\right)^{2}}\widehat{f}_{(\frac{w}{k})}(q^k, Q^k)   
\end{equation}
Expanding out the RHS of Theorem \ref{thm:GRZZ}, it becomes,

\begin{equation}
\label{eq:expanded_GRZZ}
\begin{split}
    \sum_{b=1}^w\frac{(-1)^{b+1+w}}{b}\sum_{\substack{(l_1,\ldots, l_b) \in \mathbb{Z}^b_{\geq 1} \\ \sum_i l_i = w}}\prod_{i=1}^b \left[\sum_{j_i = 1}^{p(l_i)} Z_{\nu_{j_i}}C_{\nu^T_{j_i} \varnothing \varnothing}\right]
\end{split}
\end{equation}

\begin{remark}
  In Equation \ref{eq:expanded_GRZZ}, the sum over $b$ tracks the number of representations contributing to the coefficient of $Q^{wE}$, the sum over $(l_1,\ldots, l_b) \in \mathbb{Z}^b_{\geq 1}$ tracks the length $l_i$ of each contributing representation, and the sum over $j_i$ tracks possible representations $\nu_{j_i}$ of length $l_i$.
\end{remark}
Use Equation \ref{eq:split_Znu} to write Equation \ref{eq:expanded_GRZZ} as,
    
\begin{equation}
\label{eq:oc_Znu_split}
    \sum_{b=1}^w\frac{(-1)^{b+1+w}}{b}\sum_{\substack{(l_1,\ldots, l_b) \in \mathbb{Z}^b_{\geq 1} \\ \sum_i l_i = w}}\prod_{i=1}^b \left[\sum_{j_i = 1}^{p(l_i)} \left(Z^{d=1}_{\nu_{j_i}} + Z^{d\geq 2}_{\nu_{j_i}}\right)C_{\nu^T_{j_i} \varnothing \varnothing}\right]
\end{equation}
Equating Equation \ref{eq:oc_Znu_split} with Equation \ref{eq:GV_fhat}, we have an equality of generating functions,

\begin{equation}
\label{eq:oc_split}
\begin{split}
   \sum_{\substack{k \geq 1\\ k| w, \beta}}\frac{-1}{k\left(q^{\frac{k}{2}} - q^{\frac{-k}{2}}\right)^{2}}\widehat{f}_{(\frac{w}{k})}(q^k, Q^k)  \\= \sum_{b=1}^w\frac{(-1)^{b+1+w}}{b}\sum_{\substack{(l_1,\ldots, l_b) \in \mathbb{Z}^b_{\geq 1} \\ \sum_i l_i = w}}\prod_{i=1}^b  \left[\sum_{j_i = 1}^{p(l_i)} \left(Z^{d=1}_{\nu_{j_i}} + Z^{d\geq 2}_{\nu_{j_i}}\right)C_{\nu^T_{j_i} \varnothing \varnothing}\right]
\end{split}
\end{equation}

\section{Proof of Theorem \ref{thm:oc}}
\label{sec:proof}
In this section, we show that Equation \ref{eq:oc_split} is an equality, by showing that the coefficients of individual generating functions $\widehat{f}_{R}(q^k, Q^k)$ match, and for $d \geq 2$, the coefficient of products $\prod_{i=1}^{d}\frac{1}{n_i}\widehat{f}_{\nu'_{i}}(q^{n_{i}}, Q^{n_{i}})$ appearing in the RHS of Equation \ref{eq:oc_split} vanish. The equality of generating functions in Equation \ref{eq:oc_split} implies Theorem \ref{thm:oc}, which is the equality of individual invariants.   

\subsection{Agreement of \texorpdfstring{$\widehat{f}_{R}(q^k, Q^k)$}{hat{f}R(qk, Qk)}}

\label{sec:matching}

Let $w \geq 1$ and $k | w$. We first argue that the coefficient of $\widehat{f}_{R}(q^k, Q^k)$ on the RHS of Equation \ref{eq:oc_split} for $|R| \neq \frac{w}{k}$ vanishes.

Let $\nu_{j_i}$ be a representation with $|\nu_{j_i}| = w$.  We only need to consider the $Z^{d=1}_{\nu_{j_i}}$ term in Equation \ref{eq:oc_split}. We set $\nu = \nu_{j_i}$ and $n_1 = k$ in $Z^{d=1}_{\nu_{j_i}}$ from Equation \ref{eq:Znu_d1_simplified}. In the definition of $Z^{d=1}_{\nu_{j_i}}$, we must have $|\mu_1| = \frac{w}{k}$ for the expression to be non-zero. This shows vanishing of the coefficient of $\widehat{f}_{R}(q^k, Q^k)$ for $|R| \neq \frac{w}{k}$. 

Now, suppose $|R| = \frac{w}{k}$. Set $\nu'_1 = R$ in Equation \ref{eq:Znu_d1_simplified}. Setting $b = 1$ in Equation \ref{eq:oc_split} as well as summing over all $|\nu_{j_i}| = w$,

\begin{equation}
\label{eq:exp1}
\begin{split}
    \sum_{j_1 = 1}^{p(w)} \frac{(-1)^w}{k\left(q^{\frac{k}{2}} - q^{\frac{-k}{2}}\right)}\left(\sum_{\mu_1 \dashv \frac{w}{k}}\frac{\chi^{R}(\mu_1)\chi^{\nu_{j_i}}(k \mu_1)}{z_{\mu_1}}P_{\mu_1}(q^{k})\widehat{f}_{R}(q^{k}, Q^{k})\right)\ s_{\nu^T_{j_i}}(q^{\rho})
\end{split}
\end{equation}
The change of basis from Schur functions to power sums gives, 

\begin{align*}
    \sum_{j_1 = 1}^{p(w)} \chi^{\nu_{j_i}}(k\mu_1)s_{\nu^T_{j_i}}(q^{\rho}) &= \sum_{j_1 = 1}^{p(w)} (-1)^{|k\mu_1| - l(k\mu_1)}\chi^{\nu^T_{j_i}}(k\mu_1)s_{\nu^T_{j_i}}(q^{\rho}) \\
    &= (-1)^w (-1)^{l(\mu_1)}p_{k\mu_1}(q^{\rho})
\end{align*}
Equation \ref{eq:exp1} becomes,

\begin{equation}
\label{eq:exp2}
\begin{split}
    \frac{1}{k\left(q^{\frac{k}{2}} - q^{\frac{-k}{2}}\right)}\sum_{\mu_1 \dashv \frac{w}{k}}\frac{\chi^{R}(\mu_1)}{z_{\mu_1}}P_{\mu_1}(q^{k})(-1)^{l(\mu_1)}p_{k\mu_1}(q^{\rho})\widehat{f}_{R}(q^{k}, Q^{k}) 
\end{split}
\end{equation}
By Equation \ref{eq:p and P}, Equation \ref{eq:exp2} becomes,

\begin{equation}
\label{eq:exp3}
\begin{split}
    \frac{-1}{k\left(q^{\frac{k}{2}} - q^{\frac{-k}{2}}\right)^2}\sum_{\mu_1 \dashv \frac{w}{k}}\frac{\chi^{R}(\mu_1)}{z_{\mu_1}}\widehat{f}_{R}(q^{k}, Q^{k}) 
\end{split}
\end{equation}
By orthogonality of rows of the character table of the symmetric group, we have $\sum_{\mu_1 \dashv \frac{w}{k}}\frac{\chi^{R}(\mu_1)}{z_{\mu_1}} = 1$ when $R = (\frac{w}{k})$, and 0 otherwise. 

Hence, the coefficient of $\widehat{f}_{R}(q^k, Q^k)$ on the RHS of Equation \ref{eq:oc_split} is,

\[
\frac{-1}{k\left(q^{\frac{k}{2}} - q^{\frac{-k}{2}}\right)^2}
\]
when $\nu' = (\frac{w}{k})$, and 0 when $\nu' \neq (\frac{w}{k})$. This is in agreement with Equation \ref{eq:GV_fhat}.

\subsection{Vanishing of cross terms}

\label{sec:vanishing cross terms}

Suppose $d \geq 2$. For $1 \leq i \leq d$, fix $n_i \geq 1$ and representations $\nu_i'$. For fixed $b \leq w$ in the RHS of Equation \ref{eq:oc_split}, the following cross terms may appear,

\[
\prod_{i=1}^d \frac{1}{n_i}\widehat{f}_{\nu'_i}(q^{n_i}, Q^{n_i}) = \prod_{i=1}^b \prod_{t_i = 1}^{d_i}\frac{1}{n_{t_i}}\widehat{f}_{\nu'_{t_i}}(q^{n_{t_i}}, Q^{n_{t_i}})
\]
where we have partitioned the $(n_i$, $\widehat{f}_{\nu'_i})$ into buckets of size $d_i \geq 1$ with $d_1 + \ldots + d_b = d$. Each bucket $d_i$ contains  the tuples $(n_{t_i}$, $\widehat{f}_{\nu'_{t_i}})$ with $1 \leq t_i \leq d_i$. 

Suppose $\nu_{j_i}$ is a representation with $|\nu_{j_i}| = l_i \geq 1$. It suffices to consider the $Z^{\geq 2}_{\nu_{j_i}}$ terms in Equation \ref{eq:oc_split}. From Equation \ref{eq:Znu_dgeq2_simplified}, the coefficient of $\prod_{t_i = 1}^{d_i}\frac{1}{n_{t_i}}\widehat{f}_{\nu'_{t_i}}(q^{n_{t_i}}, Q^{n_{t_i}})$ in $Z^{d \geq 2}_{\nu_{j_i}}$ is obtained by fixing $d = d_i$, $n_{t_i} \geq 1$, and $\nu'_{t_i}$, then simplifying. Within each bucket $d_i$, the $\{\frac{1}{n_{t_i}} \widehat{f}_{\nu'_{t_i}}(q^{n_{t_i}}, Q^{n_{t_i}}) | 1 \leq t_i \leq d_i\}$ can be divided into $k_i$ groups of identical elements of size $d_j'$, for $1 \leq j \leq k_i$. Hence, we factor in $\binom{d_i}{d'_1 \ldots, d'_{k_i}}$ to account for the permutations in which $\prod_{t_i=1}^{d_i} \frac{1}{n_{t_i}} \widehat{f}_{\nu'_{t_i}}(q^{n_{t_i}}, Q^{n_{t_i}})$ arises. The coefficient of $\prod_{t_i = 1}^{d_i}\frac{1}{n_{t_i}}\widehat{f}_{\nu'_{t_i}}(q^{n_{t_i}}, Q^{n_{t_i}})$ in $Z_{\nu_{j_i}}$ is given by,

\begin{equation}
\label{eq:coeff_f_Znuji}
\begin{split}
    c(d_i, \{(n_{t_i}, \widehat{f}_{\nu'_{t_i}})\}, \nu_{j_i}) := \frac{1}{d_i!}\binom{d_i}{d'_1 \ldots, d'_{k_i}}\left(\prod_{t_i=1}^{d_i} \frac{1}{q^{\frac{n_{t_i}}{2}} - q^{\frac{-n_{t_i}}{2}}}\right)  \\ 
    \left[\sum_{\mu_1 \dashv |\nu'_1|} \frac{\chi^{\nu'_1}(\mu_1)}{z_{\mu_1}}P_{\mu_1}(q^{n_1})\ldots \sum_{\mu_{d_i} \dashv |\nu'_{d_i}|} \frac{\chi^{\nu'_{d_i}}(\mu_{d_i})}{z_{\mu_{d_i}}}P_{\mu_{d_i}}(q^{n_{d_i}}) \right.\\
    \left.\left[\sum_{\substack{\alpha'_{t_i} \dashv n_{t_i}|\nu'_{t_i}| \\ 1 \leq t_i \leq d_i}} \left(\prod_{t_i=1}^{d_i}\chi^{\alpha'_{t_i}}(n_{t_i}\mu_{t_i})\right)
    \left(\sum_{\substack{\alpha'_{12},\\ \ldots, \\ \alpha'_{12\ldots d_i-1}}}c^{\alpha'_{12}}_{\alpha'_1 \alpha'_2}\ldots c^{\nu}_{\alpha'_{12\ldots d_i-1} \alpha'_{d_i}}\right)\right]\right]
\end{split}
\end{equation}
Then, the coefficient of $\prod_{i=1}^d \frac{1}{n_i}f_{\nu'_i}(q^{n_i}, Q^{n_i}) = \prod_{i=1}^b \prod_{t_i = 1}^{d_i}\frac{1}{n_{t_i}}\widehat{f}_{\nu'_{t_i}}(q^{n_{t_i}}, Q^{n_{t_i}})$ is given by,

\begin{equation}
    \sum_{\substack{d_1 + \ldots d_b = d \\ d_i \geq 1 \\ l_i = \sum_{t_i=1}^{d_i} n_{t_i}|\nu'_{t_i}|}}\prod_{i=1}^b c(d_i, \{(n_{t_i}, \widehat{f}_{\nu'_{t_i}})\}, \nu_{j_i})
\end{equation}
where we sum over all possible buckets $(d_i)$. The condition $l_i = \sum_{t_i=1}^{d_i} n_{t_i}|\nu'_{t_i}|$ is required in the definition of $Z_{\nu_{j_i}}$.

Summing over all possible $b$, the coefficient of $\prod_{i=1}^d \frac{1}{n_i}f_{\nu'_i}(q^{n_i}, Q^{n_i})$ in Equation \ref{eq:oc_split} is given by, 

\begin{equation}
\label{eq:coeff_product}
    \sum_{b=1}^w\frac{(-1)^{b+1+w}}{b}\sum_{\substack{(l_1,\ldots, l_b) \in \mathbb{Z}_{\geq 1} \\ \sum_i l_i = w}}\sum_{\substack{d_1 + \ldots d_b = d \\ d_i \geq 1 \\ l_i = \sum_{t_i=1}^{d_i} n_{t_i}|\nu'_{t_i}|}}\prod_{i=1}^b\left[\sum_{j_i = 1}^{p(l_i)}c(d_i, \{(n_{t_i}, \widehat{f}_{\nu'_{t_i}})\}, \nu_{j_i})C_{\nu^T_{j_i}}\right]
\end{equation}

\begin{remark}
\label{rem:constraint}
    If $Z_{\nu_{j_i}}^{d_i \geq 2}$, defined in Equation \ref{eq:Znu_dgeq2_simplified}, is non-zero, by the properties of Littlewood-Richardson coefficients, we have $l_i = |\nu_{j_i}| = \sum_{t_i=1}^{d_i} |\alpha'_{t_i}| = \sum_{t_i=1}^{d_i} n_{t_i} |\nu_{t_i}'|$. Then, $w = \sum_{i=1}^b |\nu_{j_i}| = \sum_{i=1}^b \sum_{t_i=1}^{d_i} n_{t_i} |\nu_{t_i}'|$. Hence, for a cross-term $\prod_{i=1}^d \frac{1}{n_i} \widehat{f}_{\nu'_i}(q^{n_i}, Q^{n_i})$ appearing in the RHS of Equation \ref{eq:oc_split}, we have $\sum_{i=1}^d n_i |\nu'_i| = w$.
\end{remark}
Switching the sums of $(d_i)$ and $(l_i)$, Equation \ref{eq:coeff_product} is equivalently, 

\begin{equation}
\label{eq:coeff_product2}
    \sum_{b=1}^w\frac{(-1)^{b+1+w}}{b}\sum_{\substack{d_1 + \ldots d_b = d \\ d_i \geq 1}}\sum_{\substack{(l_1,\ldots, l_b) \in \mathbb{Z}_{\geq 1} \\ l_1 + \ldots + l_b = w \\ l_i = \sum_{t_i=1}^{d_i} n_{t_i}|\nu'_{t_i}|}}\prod_{i=1}^b \left[\sum_{j_i = 1}^{p(l_i)}c(d_i, \{(n_{t_i}, \widehat{f}_{\nu'_{t_i}})\}, \nu_{j_i})C_{\nu^T_{j_i}}\right]
\end{equation}

\subsubsection{Vanishing}

We show that Equation \ref{eq:coeff_product2} vanishes. Using the definition of \\$c(d_i, \{(n_{t_i}, \widehat{f}_{\nu'_{t_i}})\}, \nu_{j_i})$ in Equation \ref{eq:coeff_f_Znuji}, Equation \ref{eq:coeff_product2} becomes,

\begin{equation}
\label{eq:vanishing_exp}
\begin{split}
    \sum_{b=1}^w\frac{(-1)^{b+1+w}}{b}\sum_{\substack{d_1 + \ldots d_b = d \\ d_i \geq 1}}\sum_{\substack{(l_1,\ldots, l_b) \in \mathbb{Z}_{\geq 1} \\ l_1 + \ldots + l_b = w \\ l_i = \sum_{t_i = 1}^{d_i}n_{t_i} |\nu'_{t_i}|}}\prod_{i=1}^b \left[\sum_{j_i = 1}^{p(l_i)}\frac{1}{d_i!}\binom{d_i}{d'_1 \ldots, d'_{k_i}}\left(\prod_{t_i=1}^{d_i}\frac{1}{q^{\frac{n_{t_i}}{2}} - q^{\frac{-n_{t_i}}{2}}}\right)\right.\\
    \left.\left[\sum_{\mu_1 \dashv |\nu'_1|} \frac{\chi^{\nu'_1}(\mu_1)}{z_{\mu_1}}P_{\mu_1}(q^{n_1}) \ldots \sum_{\mu_{d_i} \dashv |\nu'_{d_i}|} \frac{\chi^{\nu'_{d_i}}(\mu_{d_i})}{z_{\mu_{d_i}}}P_{\mu_{d_i}}(q^{n_{d_i}}) \right.\right.\\
    \left.\left.\left[\sum_{\substack{\alpha'_{t_i} \dashv n_{t_i}|\nu'_{t_i}| \\ 1 \leq t_i \leq d_i}} \left(\prod_{t_i=1}^{d_i}\chi^{\alpha'_{t_i}}(n_{t_i}\mu_{t_i})\right)
    \left(\sum_{\substack{\alpha'_{12},\\ \ldots, \\ \alpha'_{12\ldots d_i-1}}}c^{\alpha'_{12}}_{\alpha'_1 \alpha'_2}\ldots c^{\nu}_{\alpha'_{12\ldots d_i-1} \alpha'_{d_i}}\right)\right]\right]C_{\nu^T_{j_i}}\right]
\end{split}
\end{equation}
Recall that $C_{\nu^T_{j_i}} = s_{\nu^T_{j_i}}(q^{\rho})$. Since $c_{\alpha'_x \alpha'_y}^{\alpha'_{xy}} = c_{\alpha'^T_x \alpha'^T_y}^{\alpha'^T_{xy}}$ for all $x, y$, and Schur functions form a $\mathbb{Z}$-basis of symmetric functions, we have,

\[
\sum_{j_i=1}^{p(l_i)} \left(\sum_{\substack{\alpha'_{12},\\ \ldots, \\ \alpha'_{12\ldots d_i-1}}}c^{\alpha'^T_{12}}_{\alpha'^T_1 \alpha'^T_2}\ldots c^{\nu^T_{j_i}}_{\alpha'^T_{12\ldots d_i-1} \alpha'^T_{d_i}}\right)s_{\nu^T_{j_i}}(q^{\rho}) = s_{\alpha'^T_1}\ldots s_{\alpha'^T_{d_i}}(q^{\rho})
\]
Moving the sum over $j_i$ in, Equation \ref{eq:vanishing_exp} becomes,

\begin{equation}
\label{eq:vanishing_exp_1}
\begin{split}
    \sum_{b=1}^w\frac{(-1)^{b+1+w}}{b}\sum_{\substack{d_1 + \ldots d_b = d \\ d_i \geq 1}}\sum_{\substack{(l_1,\ldots, l_b) \in \mathbb{Z}_{\geq 1} \\ l_1 + \ldots + l_b = w \\ l_i = \sum_{t_i = 1}^{d_i}n_{t_i} |\nu'_{t_i}|}}\prod_{i=1}^b \left[\frac{1}{d_i!}\binom{d_i}{d'_1 \ldots, d'_{k_i}}\left(\prod_{t_i=1}^{d_i}\frac{1}{q^{\frac{n_{t_i}}{2}} - q^{\frac{-n_{t_i}}{2}}}\right)\right.\\
    \left.\left[\sum_{\mu_1 \dashv |\nu'_1|} \frac{\chi^{\nu'_1}(\mu_1)}{z_{\mu_1}}P_{\mu_1}(q^{n_1})\ldots \sum_{\mu_{d_i} \dashv |\nu'_{d_i}|} \frac{\chi^{\nu'_{d_i}}(\mu_{d_i})}{z_{\mu_{d_i}}}P_{\mu_{d_i}}(q^{n_{d_i}}) \right.\right. \\
   \left. \left.\left[\sum_{\substack{\alpha'_{t_i} \dashv n_{t_i}|\nu'_{t_i}| \\ 1 \leq t_i \leq d_i}} \left(\prod_{t_i=1}^{d_i}\chi^{\alpha'_{t_i}}(n_{t_i}\mu_{t_i})\right)s_{\alpha'^T_1}\ldots s_{\alpha'^T_{d_i}}(q^{\rho})\right]\right]\right]
\end{split}
\end{equation}
For $1 \leq t_i \leq d_i$, the change of basis formula gives us,

\begin{align*}
    \sum_{\alpha_{t_i}' \dashv n_{t_i} |\nu'_{t_i}|} \chi^{\alpha'_{t_i}}(n_{t_i} \mu_{t_i}) s_{\alpha'^T_{t_i}}(q^{\rho}) &= \sum_{\alpha_{t_i}' \dashv n_{t_i} |\nu'_{t_i}|}  (-1)^{|n_{t_i} \mu_{t_i}| - l(n_{t_i} \mu_{t_i})}\chi^{\alpha'^T_{t_i}}(n_{t_i} \mu_{t_i})s_{\alpha'^T_{t_i}}(q^{\rho}) \\
    &=  (-1)^{|n_{t_i} \mu_{t_i}| - l(n_{t_i} \mu_{t_i})}p_{n_{t_i} \mu_{t_i}}(q^{\rho})
\end{align*}
Equation \ref{eq:vanishing_exp_1} becomes, 

\begin{equation}
\label{eq:vanishing_exp_2}
\begin{split}
    \sum_{b=1}^w\frac{(-1)^{b+1+w}}{b}\sum_{\substack{d_1 + \ldots d_b = d \\ d_i \geq 1}}\sum_{\substack{(l_1,\ldots, l_b) \in \mathbb{Z}_{\geq 1} \\ l_1 + \ldots + l_b = w \\ l_i = \sum_{t_i = 1}^{d_i}n_{t_i} |\nu'_{t_i}|}}\prod_{i=1}^b \left[\frac{1}{d_i!}\binom{d_i}{d'_1 \ldots, d'_{k_i}}\left(\prod_{t_i=1}^{d_i}\frac{1}{q^{\frac{n_{t_i}}{2}} - q^{\frac{-n_{t_i}}{2}}}\right)\right.\\
    \left.\left[\sum_{\mu_1 \dashv |\nu'_1|} \frac{\chi^{\nu'_1}(\mu_1)}{z_{\mu_1}}P_{\mu_1}(q^{n_1}) \ldots \sum_{\mu_{d_i} \dashv |\nu'_{d_i}|} \frac{\chi^{\nu'_{d_i}}(\mu_{d_i})}{z_{\mu_{d_i}}}P_{\mu_{d_i}}(q^{n_{d_i}}) \right.\right.\\
    \left.\left.\left[\prod_{t_i=1}^{d_i}(-1)^{|n_{t_i} \mu_{t_i}| - l(n_{t_i} \mu_{t_i})}p_{n_{t_i} \mu_{t_i}}(q^{\rho}))\right]\right]\right]
\end{split}
\end{equation}
We have $\prod_{i=1}^b\prod_{t_i=1}^{d_i} (-1)^{|n_{t_i}\mu_{t_i}|} = (-1)^w$. By Equation \ref{eq:p and P}, Equation \ref{eq:vanishing_exp_2} becomes,

\begin{equation}
\label{eq:vanishing_exp_3}
\begin{split}
    \sum_{b=1}^w\frac{(-1)^{b+1}}{b}\sum_{\substack{d_1 + \ldots d_b = d \\ d_i \geq 1}}\sum_{\substack{(l_1,\ldots, l_b) \in \mathbb{Z}_{\geq 1} \\ l_1 + \ldots + l_b = w \\ l_i = \sum_{t_i = 1}^{d_i}n_{t_i} |\nu'_{t_i}|}}\prod_{i=1}^b \left[\frac{1}{d_i!}\binom{d_i}{d'_1 \ldots, d'_{k_i}}\left(\prod_{t_i=1}^{d_i}\frac{-1}{\left(q^{\frac{n_{t_i}}{2}} - q^{\frac{-n_{t_i}}{2}}\right)^2}\right)\right.\\
    \left.\left[\sum_{\mu_1 \dashv \nu'_1} \frac{\chi^{\nu'_1}(\mu_1)}{z_{\mu_1}} \ldots \sum_{\mu_{d_i} \dashv |\nu'_{d_i}|} \frac{\chi^{\nu'_{d_i}}(\mu_{d_i})}{z_{\mu_{d_i}}}\right]\right]
\end{split}
\end{equation}
By orthogonality of the rows of the character table of the symmetric group, we have that for all $1 \leq t_i \leq d_i$, $\sum_{\mu_{t_i} \dashv |\nu'_{t_i}|}\frac{\chi^{\nu'_{t_i}}(\mu_{t_i})}{z_{\mu_{t_i}}} = 1$ if $\nu'_{t_i}$ is the representation with a single row of length $(|\nu'_{t_i}|)$, and 0 otherwise. 

\begin{remark}
\label{rem:vanishing_other}
    This proves vanishing for products $\prod_{i=1}^d \frac{1}{n_i}\widehat{f}_{\nu'_i}(q^{n_i}, Q^{n_i})$ in which $\exists i \geq 1$ such that $\nu'_i$ is not the single row.
\end{remark}
Henceforth, it suffices to let $\nu_i'$ be single row representations for all $i$. Equation \ref{eq:vanishing_exp_3} simplifies to,

\begin{equation}
\label{eq:vanishing_exp_4}
\begin{split}
    (-1)^{d} \left(\prod_{i=1}^d \frac{1}{\left(q^{\frac{n_{i}}{2}} - q^{\frac{-n_{i}}{2}}\right)^{2}}\right) \sum_{b=1}^w\frac{(-1)^{b+1}}{b}\sum_{\substack{d_1 + \ldots d_b = d \\ d_i \geq 1}}\sum_{\substack{(l_1,\ldots, l_b) \in \mathbb{Z}_{\geq 1} \\ l_1 + \ldots + l_b = w \\ l_i = \sum_{t_i = 1}^{d_i}n_{t_i} |\nu'_{t_i}|}}\prod_{i=1}^b \frac{1}{d'_1! \ldots d'_{k_i}!}
\end{split}
\end{equation}
Canceling constants, it suffices to show the vanishing of,

\begin{equation}
\label{eq:exp}
   \sum_{b=1}^w\frac{(-1)^{b+1}}{b}\sum_{\substack{d_1 + \ldots d_b = d \\ d_i \geq 1}}\sum_{\substack{(l_1,\ldots, l_b) \in \mathbb{Z}_{\geq 1} \\ l_1 + \ldots + l_b = w \\ l_i = \sum_{t_i = 1}^{d_i} n_{t_i}|\nu'_{t_i}|}}\prod_{i=1}^b \frac{1}{d'_1! \ldots d'_{k_i}!} = 0 
\end{equation}

\begin{remark}
    In the first grouping into buckets $d_i$ in the above, we treat all $\widehat{f}_{\nu'_i}(q^{n_i}, Q^{n_i})$ as distinct, labeled elements, even though there may exists $i \neq j$ such that $\nu'_i = \nu'_j$. After partitioning into the buckets $d_i$, the elements are then considered unlabeled, and we therefore divide by $d'_1! \ldots d'_{k_i}!$ to get the correct number of permutations. We have $\sum_{j=1}^{k_i} d'_j = d_i$ for all $i$. 
\end{remark}

\begin{proposition}
\label{prop:stirling}
Equation \ref{eq:exp} vanishes.
\begin{proof}

First, we suppose that $n_{t_i} = 1$ in Equation \ref{eq:exp} for all $1 \leq t_i \leq d_i$ and $1 \leq i \leq d$. We show the vanishing of,

\[
\sum_{b=1}^w\frac{(-1)^{b+1}}{b}\sum_{\substack{d_1 + \ldots d_b = d \\ d_i \geq 1}}\sum_{\substack{(l_1,\ldots, l_b) \in \mathbb{Z}_{\geq 1} \\ l_1 + \ldots + l_b = w \\ l_i = \sum_{t_i = 1}^{d_i}|\nu'_{t_i}|}}\prod_{i=1}^b \frac{1}{d'_1! \ldots d'_{k_i}!} = 0
\]
Note that the sum over $b$ only goes up to $d$ since $d_i \geq 1$. Recall for the product $\prod_{i=1}^d \widehat{f}_{\nu'_i}(q, Q)$, it's required that $\sum_i |\nu'_i| = w$ by Remark \ref{rem:constraint}. Hence, we can remove the sum over $(l_i)$. It suffices to show,

\begin{equation}
\label{eq:before stirling}
    \sum_{b=1}^d\frac{(-1)^{b+1}}{b}\sum_{\substack{d_1 + \ldots d_b = d \\ d_i \geq 1}}\prod_{i=1}^b \frac{1}{d'_1 ! \ldots d'_{k_i}!} = 0  
\end{equation}
We argue that,

\begin{equation}
\label{eq:Stirling}
   \sum_{\substack{d_1 + \ldots d_b = d \\ d_i \geq 1}}\prod_{i=1}^b \frac{1}{d'_1 ! \ldots d'_{k_i}!} = \frac{b! S(d, b)}{d!}
\end{equation}
where $S(d,b)$ is the second Stirling number with $d$ items and $b$ buckets. \footnote{The second Stirling number $S(d, b)$ is the number of ways to partition $d$ distinct, labeled objects into $b$ non-empty sets with unlabeled objects.} Recall that $(e^x-1)^b := \sum_{n=b}^{\infty} \frac{b!S(n,b)}{n!}x^n$. 

Let $M$ be the number of distinct integers in $\{|\nu'_j| | 1 \leq j \leq d\}$. Let $x_1, x_2,\ldots, x_M$ be formal variables. Label the distinct integers $|\nu'_{x_i}|$ for $1 \leq i \leq M$. Let $x := x_1 + \ldots + x_M$. Consider $x^d = x_1^{a_1}\ldots x_M^{a_M}$, where $d = a_1 + \ldots + a_M$. We interpret $x_i^{a_i}$ is the number of occurrences of $|\nu'_{x_i}|$. We set $a_1 + \ldots + a_M = d$ since the product $\prod_{i=1}^d \widehat{f}_{\nu'_i}(q, Q)$ has $d$ terms.  

Consider $(e^x - 1)^b = (e^{x_1+\ldots + x_M} - 1)^b$. We have $e^x = \sum_{a_i \geq 0} \frac{x_1^{a_1}\ldots x_M^{a_M}}{a_1! \ldots a_M!}$. We see that the coefficient of $x^d$ in $(e^{x} - 1)^b$ is equal to $\sum_{\substack{d_1 + \ldots d_b = d \\ d_i \geq 1}}\prod_{i=1}^b \frac{1}{d'_1 ! \ldots d'_{k_i}!}$, with $d'_{k_j}$ for $1 \leq j \leq i$ is defined in Section \ref{sec:vanishing cross terms}, and $\sum_{j=1}^{k_i} d'_j = d_i$ for all $i$. By definition, the coefficient of $x^d$ is also $\frac{b! S(d,b)}{d!}$. 

Therefore, it suffices to show that for $d \geq 2$, 

\[
\sum_{b=1}^d \frac{(-1)^{b+1}}{b} \frac{b! S(d,b)}{d!} = 0
\]
This follows from,

\begin{align*}
    \sum_{d=0}^{\infty} \left( \sum_{b=1}^d \frac{(-1)^{b+1}}{b} \frac{b!S(d,b)}{d!}\right) x^d &= \sum_{b=1}^{\infty} \frac{(-1)^{b+1}}{b}\sum_{d=b}^{\infty} \frac{b! S(d,b)}{d!}x^d \\
    &= \sum_{b=1}^{\infty} \frac{(-1)^{b+1}}{b} b! \frac{(e^x - 1)^b}{b!} \\
    &= \sum_{b=1}^{\infty} \frac{(-1)^{b+1}}{b}(e^x - 1)^b \\
    &= \log(1 + e^x - 1) \\
    &= x
\end{align*}
Thus, when $n_i = 1$ and the $\nu'_i$ are single rows, the coefficient of $\prod_{i=1}^d \widehat{f}_{\nu'_i}(q, Q)$ in Equation \ref{eq:oc_split} vanishes. The vanishing for $\nu'_i$ when it is not a single row is explained in Remark \ref{rem:vanishing_other}. For $n_{i} \geq 1$, the vanishing of Equation \ref{eq:exp} follows from the derivation for vanishing when $n_i = 1$, because we consider tuples $(n_{i}, f_{\nu'_{i}})$, in which each $\nu'_{i}$ is paired with an $n_{i}$.
\end{proof}
\end{proposition}

\begin{proof}[Proof of Theorem \ref{thm:oc}]

\label{pf:oc}
    By plugging in Theorem \ref{thm:oc} into the Gopakumar-Vafa formula in Equation \ref{eq:GV}, we derived Equation \ref{eq:oc_split}.
    
    In Section \ref{sec:matching}, we matched the coefficients of $\widehat{f}_{R}(q^k, Q^k)$ on the LHS and RHS of Equation \ref{eq:oc_split}. In Section \ref{sec:vanishing cross terms}, we found the coefficients of cross terms $\prod_{i=1}^d \frac{1}{n_i}\widehat{f}_{\nu'_i}(q^{n_i}, Q^{n_i})$ for $d \geq 2$ appearing in the RHS of Equation \ref{eq:oc_split}, and showed their vanishing in Proposition \ref{prop:stirling}. Since $n_i \geq 1$, this also means vanishing for the coefficients of $\prod_{i=1}^d \widehat{f}_{\nu'_i}(q^{n_i}, Q^{n_i})$. 

    Thus, Equation \ref{eq:oc_split} is an equality, which holds on the level of generating functions. It implies Theorem \ref{thm:oc}, or equality on the level of individual invariants, 

    \[
    n_g(K_{\widehat{X}}, \pi^*\beta-wC) = (-1)^g N^{LMOV}_{g, (w)}(K_X/L, \beta)
    \]
\end{proof}

\section{Examples} 

We provide examples of Theorem \ref{thm:GRZZ} in $w = 1,2,3$. We write $\widehat{f}_R$, in place of $\widehat{f}_R(q, Q)$ for simplicity.
\label{sec:examples}
\begin{example}[Winding-1]
\label{ex:w1}

Setting $w = 1$ in Theorem \ref{thm:GRZZ}, we have,

\begin{align*}
  Q^{-C}\sum_{\substack{g \geq 0, \\ \beta \in NE(X)}}N_g(K_{\widehat{X}}, \pi^*\beta - C)Q^{\pi^*\beta}\hbar^{2g-2} &= -Z_{(1)}C_{(1)\varnothing \varnothing}Q^E \\
  &= -(q^{\frac{1}{2}} - q^{\frac{-1}{2}})^{-2}\widehat{f}_{(1)}Q^E
\end{align*}
The Gopakumar-Vafa formula \cite{GV1} \cite{GV2} for the curve class $\pi^*\beta - C$ is,

\begin{equation*}
    \begin{split}
        \sum_{\substack{g \geq 0, \\ \beta \in NE(X)}}N_g(K_{\widehat{X}}, \pi^*\beta - C)Q^{\pi^*\beta - C}\hbar^{2g-2} = \sum_{\substack{g \geq 0, \\ \beta \in NE(X)}}(-1)^{g-1}n_g(K_{\widehat{X}}, \pi^*\beta - C) Q^{\pi^*\beta - C}\\(q^{\frac{1}{2}} - q^{\frac{-1}{2}})^{2g-2}
    \end{split}
\end{equation*}
where $q = e^{i \hbar}$. Hence multiplying by $-(q^{\frac{1}{2}} - q^{\frac{-1}{2}})^{2}$ and canceling $Q^{-C}$ with $Q^E$, we have

\begin{equation}
\label{eq:winding-1_eq}
   \sum_{\substack{g \geq 0, \\ \beta \in NE(X)}}(-1)^{g}n_g(K_{\widehat{X}}, \pi^*\beta - C) Q^{\pi^*\beta}(q^{\frac{1}{2}} - q^{\frac{-1}{2}})^{2g} = \widehat{f}_{(1)} 
\end{equation}
under the variable change $z = (q^{\frac{1}{2}} - q^{\frac{-1}{2}})^{2}$. Hence, we have,

\begin{corollary}
  \label{cor:winding-1}
    For $w = 1$, Example \ref{ex:w1} of Theorem \ref{thm:GRZZ} implies,
    \begin{align*}
      n_g(K_{\widehat{X}}, \pi^*\beta-C) &= (-1)^{g}N^{\mathrm{LMOV}}_{g, (1)}(K_{X}/L, \beta)
    \end{align*}
\end{corollary}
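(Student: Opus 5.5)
The statement to prove is Corollary \ref{cor:winding-1}, which follows directly from Equation \ref{eq:winding-1_eq}. The plan is to compare coefficients of the two power series on either side of that equation after expanding $\widehat{f}_{(1)}$ by its definition in Equation \ref{eq:def_fhat}.

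First, I confirm the derivation of Equation \ref{eq:winding-1_eq}.
\begin{itemize}
\item For $w=1$, the only partition with one box is $\nu=(1)$. Only $b=1$ and $d=1$ contribute to $Z_{(1)}$, and only $n_1=1$.
\item Equation \ref{eq:Znu_d1_simplified} then reduces, using $P_{(1)}(q)=1$, to $Z_{(1)}=(q^{\frac12}-q^{-\frac12})^{-1}\widehat{f}_{(1)}$.
\item Since $C_{(1)}=s_{(1)}(q^\rho)=(q^{\frac12}-q^{-\frac12})^{-1}$, the right side of Theorem \ref{thm:GRZZ} becomes $-(q^{\frac12}-q^{-\frac12})^{-2}\widehat{f}_{(1)}Q^E$, which matches Example \ref{ex:w1}.
\item On the left, the class $\pi^*\beta-C$ is primitive because its $C$-coefficient is $1$. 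Hence only $k=1$ survives in the Gopakumar-Vafa formula (Conjecture \ref{conj:cmc}, as used in Equation \ref{eq:GV}).
\item Multiplying both sides by $-(q^{\frac12}-q^{-\frac12})^2$ and identifying $Q^{-C}$ with $Q^{E}$ by flop invariance \cite{KM} gives Equation \ref{eq:winding-1_eq}.
\end{itemize}

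Next, I substitute the definition of $\widehat{f}_{(1)}$ from Equation \ref{eq:def_fhat} with $\mu=(1)$. This turns the right side of Equation \ref{eq:winding-1_eq} into
\[
\sum_{g,\beta}N^{LMOV}_{g,(1)}(K_X/L,\beta)\,z^{g}Q^\beta,
\]
while the left side is $\sum_{g,\beta}(-1)^g n_g(K_{\widehat{X}},\pi^*\beta-C)\,z^gQ^{\pi^*\beta}$, where $z=(q^{\frac12}-q^{-\frac12})^2$.

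Since $\pi^*$ is injective on $H_2$, the monomials $Q^{\pi^*\beta}$ and $Q^\beta$ are in bijection. The step that needs care is treating $z$ as an independent formal variable. Writing $q=e^{i\hbar}$ gives $z=-4\sin^2(\hbar/2)=-\hbar^2+O(\hbar^4)$, a power series in $\hbar^2$ with leading term $-\hbar^2$. The powers $z^g$ therefore have distinct leading orders $\hbar^{2g}$, so they are linearly independent and both sides can be regarded as series in $\mathbb{Q}[[z]][[Q]]$.

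Comparing the coefficients of $z^gQ^\beta$ then gives $(-1)^g n_g(K_{\widehat{X}},\pi^*\beta-C)=N^{LMOV}_{g,(1)}(K_X/L,\beta)$. Multiplying by $(-1)^g$ yields the corollary. The main obstacle is just bookkeeping of signs and the $w=1$ specialisations; no new vanishing argument of the kind in Proposition \ref{prop:stirling} is needed, because there are no cross terms when $w=1$.
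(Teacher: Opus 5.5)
Your proposal is correct and follows the paper's own argument. You specialize Theorem \ref{thm:GRZZ} to $w=1$, so the right side becomes $-Z_{(1)}C_{(1)}Q^E=-(q^{\frac12}-q^{-\frac12})^{-2}\widehat{f}_{(1)}Q^E$. You use the primitive ($k=1$) Gopakumar--Vafa formula, multiply by $-(q^{\frac12}-q^{-\frac12})^{2}$, cancel $Q^{-C}$ against $Q^E$, and compare coefficients of $z^gQ^\beta$ in Equation \ref{eq:winding-1_eq}. Your extra checks (deriving $Z_{(1)}$ from Equation \ref{eq:Znu_d1_simplified}, and noting that $z=-4\sin^2(\hbar/2)$ makes the $z^g$ independent) only fill in details the paper leaves implicit. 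The one slip is cosmetic: the $C$-coefficient of $\pi^*\beta-C$ is $-1$, not $1$, which still gives primitivity.
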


\end{example}

\begin{example}[Theorem \ref{thm:oc} in winding-2]
\label{ex:w2}
Setting $w = 2$ in Equation \ref{thm:GRZZ}, we have 

\[
Q^{-2C}\sum_{\substack{g \geq 0, \\ \beta \in NE(X)}}N_g(K_{\widehat{X}}, \pi^*\beta - 2C)Q^{\pi^*\beta} \hbar^{2g-2}= \left( Z_{(2)} C_{(1,1)} + Z_{(1,1)} C_{(2)} - \frac{\left(Z_{(1)} C_{(1)}\right)^2}{2}\right)Q^{2E} 
\]
The relevant quantities are described in \cite{AKMV} and are,
\begin{align*}
  C_{(2)} &= \frac{q^2}{(q-1)(q^2-1)} \\
  C_{(1,1)} &= \frac{q}{(q-1)(q^2-1)} \\
  C_{(1)} &= (q^{\frac{1}{2}} - q^{\frac{-1}{2}})^{-1} \\
  Z_{(2)} &= f_{(2)} + \frac{1}{2}f_{(1)}(q^2, Q^2) + \frac{1}{2}f_{(1)}(q, Q)^2  \\
  Z_{(1,1)} &= f_{(1,1)} - \frac{1}{2}f_{(1)}(q^2, Q^2) + \frac{1}{2}f_{(1)}(q, Q)^2 \\
  f_{(2)} &= (q^{\frac{1}{2}} - q^{\frac{-1}{2}})^{-1}(q^{\frac{-1}{2}}\widehat{f}_{(2)} - q^{\frac{1}{2}}\widehat{f}_{(1,1)}) \\
  f_{(1,1)} &= (q^{\frac{1}{2}} - q^{\frac{-1}{2}})^{-1}(-q^{\frac{1}{2}}\widehat{f}_{(2)} + q^{\frac{-1}{2}}\widehat{f}_{(1,1)}) \\
  Z_{(1)} &= f_{(1)} = (q^{\frac{1}{2}} - q^{\frac{-1}{2}})^{-1}\widehat{f}_{(1)}
\end{align*}
where $\widehat{f}$ is defined in Equation \ref{eq:def_fhat}. We have $C_{(1,1)} + C_{(2)} = (q^{\frac{1}{2}} - q^{\frac{-1}{2}})^{-2}$ and $C_{(1,1)} - C_{(2)} = (q^{-1}-q)^{-1}$. The Gopakumar-Vafa formula for curve class $\pi^*\beta - 2C$ is,

\begin{equation}
\label{eq:winding-2_GV}
 \begin{split}
   \sum_{\substack{g \geq 0, \\ \beta \in NE(X)}}N_g(K_{\widehat{X}}, \pi^*\beta - 2C) \hbar^{2g-2} Q^{\pi^*\beta - 2C} = \sum_{\substack{g \geq 0, \\ \beta \in NE(X)}}(-1)^{g-1}\left[n_g(K_{\widehat{X}}, \pi^*\beta - 2C)\right. \\ \left.\left(q^{\frac{1}{2}} - q^{\frac{-1}{2}}\right)^{2g-2} + \frac{1}{2}n_g\left(K_{\widehat{X}}, \frac{\pi^*\beta}{2} - C\right)(q-q^{-1})^{2g-2}\right]Q^{\pi^*\beta - 2C}
\end{split}   
\end{equation}
Plugging in expressions, we have,

\begin{equation}
\label{eq:winding2_vertex}
    \begin{split}
        Z_{(2)} C_{(1,1)} + Z_{(1,1)} C_{(2)} - \frac{1}{2}(Z_{(1)} C_{(1)})^2 &= f_{(2)} \frac{q}{(q-1)(q^2-1)} + f_{(1,1)}\frac{q^2}{(q-1)(q^2-1)} \\
 &- \frac{1}{2(q-q^{-1})}f_{(1)}(q^2, Q^2) \\
 &= \left(q^{\frac{-1}{2}}\widehat{f}_{(2)} - q^{\frac{1}{2}}\widehat{f}_{(1,1)}\right)\frac{q}{(q^{\frac{1}{2}} - q^{\frac{-1}{2}})(q-1)(q^2-1)} \\
 &+ \left(-q^{\frac{1}{2}}\widehat{f}_{(2)} + q^{\frac{-1}{2}}\widehat{f}_{(1,1)}\right)\frac{q^2}{(q^{\frac{1}{2}} - q^{\frac{-1}{2}})(q-1)(q^2-1)} \\
 &- \frac{1}{2(q-q^{-1})^2}\widehat{f}_{(1)}(q^2, Q^2) \\
 &= \frac{-1}{(q^{\frac{1}{2}} - q^{\frac{-1}{2}})^2}\widehat{f}_{(2)}(q, Q) - \frac{1}{2(q-q^{-1})^2}\widehat{f}_{(1)}(q^2, Q^2)  
    \end{split}
\end{equation}
Setting Equation \ref{eq:winding-2_GV} equal to Equation \ref{eq:winding2_vertex} implies, on the level of individual invariants,

\begin{corollary}
\label{cor:winding-2}
For $w = 2$, Example \ref{ex:w2} of Theorem \ref{thm:GRZZ} implies,

\begin{align*}
n_g(K_{\widehat{X}}, \pi^*\beta - 2C) &= (-1)^g N^{\mathrm{LMOV}}_{g, (2)}(K_X/L, \beta) 
\end{align*} 
\end{corollary}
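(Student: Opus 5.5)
The plan is to read off Corollary \ref{cor:winding-2} by comparing coefficients in the identity obtained from setting Equation \ref{eq:winding-2_GV} equal to Equation \ref{eq:winding2_vertex}. First, following Example \ref{ex:w1}, cancel $Q^{-2C}$ against $Q^{2E}$ using flop invariance \cite{KM}. Both sides then become formal series in $Q^{\beta}$, $\beta \in NE(X)$, with coefficients Laurent series in $\hbar$, where $q = e^{i\hbar}$.

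Next, split each side into its $k=1$ part and its $k=2$ part. On the Gopakumar-Vafa side the $k=2$ part is the term
\[
\tfrac{1}{2}(-1)^{g-1}n_g\left(K_{\widehat{X}}, \tfrac{\pi^*\beta}{2}-C\right)(q-q^{-1})^{2g-2},
\]
which is present only when $\beta = 2\beta'$. On the vertex side the $k=2$ part is
\[
-\frac{1}{2(q-q^{-1})^2}\widehat{f}_{(1)}(q^2,Q^2).
\]
Expand the latter using Equation \ref{eq:def_fhat} with $q \mapsto q^2$ and $Q \mapsto Q^2$. Since $(q-q^{-1})^2 = (q^{\frac{1}{2}}-q^{\frac{-1}{2}})^2 \big|_{q \mapsto q^2}$, it equals
\[
-\tfrac{1}{2}\sum_{g,\beta'} N^{LMOV}_{g,(1)}(K_X/L,\beta')(q-q^{-1})^{2g-2}Q^{2\beta'}.
\]
Corollary \ref{cor:winding-1}, applied to the class $\beta'$, gives $n_g(K_{\widehat{X}},\pi^*\beta'-C) = (-1)^g N^{LMOV}_{g,(1)}(K_X/L,\beta')$. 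Since $(-1)^{g-1}(-1)^g = -1$, the two $k=2$ parts coincide term by term, and they cancel.

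What remains, after multiplying by $-(q^{\frac12}-q^{-\frac12})^{2}$ exactly as in Example \ref{ex:w1}, is
\[
\sum_{g,\beta}(-1)^{g}n_g(K_{\widehat{X}},\pi^*\beta-2C)\,z^{g}Q^{\beta} = \widehat{f}_{(2)} = \sum_{g,\beta}N^{LMOV}_{g,(2)}(K_X/L,\beta)\,z^{g}Q^{\beta}, \qquad z=(q^{\frac12}-q^{-\frac12})^2 .
\]
Fix $\beta$ and compare the coefficients of $Q^{\beta}$. This gives $(-1)^g n_g = N^{LMOV}_{g,(2)}$, which is the claim.

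The main obstacle is justifying the coefficient extraction in $g$. For this, observe that $z = -(2\sin\frac{\hbar}{2})^2 = -\hbar^2 + O(\hbar^4)$. The powers $z^g$ are therefore triangular with respect to $\hbar^{2g}$, hence linearly independent. The same reasoning applies to the powers of $(q-q^{-1})^2$ used in the $k=2$ cancellation. For each fixed $\beta$ only finitely many $g$ contribute, by the finiteness of GV and LMOV invariants \cite{Yu}, and in any case triangularity alone suffices for the comparison. Beyond this, the remaining work is careful bookkeeping of signs and of the factor $\tfrac12$ in the $k=2$ terms.
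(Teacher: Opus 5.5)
Your proposal is correct and takes the paper's own route: equate the Gopakumar--Vafa expansion in Equation \ref{eq:winding-2_GV} with the simplified vertex expression in Equation \ref{eq:winding2_vertex}, and read off the coefficient of $\widehat{f}_{(2)}$. Your $k=2$ cancellation via Corollary \ref{cor:winding-1} (for $\beta=2\beta'$) and your extraction of coefficients using $z^g=(-1)^g\hbar^{2g}(1+O(\hbar^2))$ spell out what the paper leaves implicit in ``on the level of individual invariants,'' while the simplification leading to Equation \ref{eq:winding2_vertex}, which you take as given, is where the paper does its actual computation.
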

\end{example}

\begin{example}[Winding-3]
\label{ex:w3}
Setting $w = 3$ in Theorem \ref{thm:GRZZ}, we have, 

\begin{equation}
\label{eq:w3_thm}
 \begin{split}
    \sum_{g, \beta}N_g(K_{\widehat{X}}, \pi^*\beta - 3C)\hbar^{2g-2}Q^{\pi^*\beta-3C} &= \left( -Z_{(3)} C_{(1,1,1)} - Z_{(2,1)}C_{(2,1)} - Z_{(1,1,1)}C_{(3)}\right. \\
    &+ \left. Z_1 C_1\left(Z_{(2)}C_{(1,1)} 
    + Z_{(1,1)}C_{(2)}\right) - \frac{\left(Z_1 C_1\right)^3}{3}\right) Q^{3E}   
 \end{split}
\end{equation}
The Gopakumar-Vafa formula for curve class $\pi^*\beta - 3C$ is,

\begin{equation}
\label{eq:winding-3_GV}
 \begin{split}
    \sum_{\substack{g \geq 0, \\ \beta \in NE(X)}}N_g(K_{\widehat{X}}, \pi^*\beta - 3C) \hbar^{2g-2}Q^{\pi^*\beta - 3C} = \sum_{\substack{g \geq 0, \\ \beta \in NE(X)}}(-1)^{g-1}\left[n_g(K_{\widehat{X}}, \pi^*\beta - 3C)\right.\\
     \left.(q^{\frac{1}{2}} - q^{\frac{-1}{2}})^{2g-2} + \frac{1}{3}n_g\left(K_{\widehat{X}}, \frac{\pi^*\beta}{3} - C\right)(q^{\frac{3}{2}}-q^{\frac{-3}{2}})^{2g-2}\right]Q^{\pi^*\beta - 3C}
\end{split}   
\end{equation}

We have,

\begin{align*}
Z_3 &= (q^{\frac{1}{2}} - q^{\frac{-1}{2}})^{-1}(q^{-1}\widehat{f}_3 - \widehat{f}_{(2,1)} + q\widehat{f}_{(1,1,1)}) + (q^{\frac{1}{2}} - q^{\frac{-1}{2}})^{-2}\widehat{f}_1(q^{\frac{-1}{2}}\widehat{f}_2 - q^{\frac{1}{2}}\widehat{f}_{(1,1)}) \\
&+ \frac{1}{6}(q^{\frac{1}{2}} - q^{\frac{-1}{2}})^{-3}\widehat{f}_1^3 + \frac{1}{2}(q^{\frac{1}{2}} - q^{\frac{-1}{2}})^{-1}\widehat{f}_1 (q-q^{-1})^{-1}\widehat{f}_1(q^2, Q^2) \\
&+ \frac{1}{3}(q^{\frac{3}{2}} - q^{\frac{-3}{2}})^{-1}\widehat{f}_1(q^3, Q^3) \\
Z_{(2,1)} &= (q^{\frac{1}{2}} - q^{\frac{-1}{2}})^{-1}(-\widehat{f}_3 + (q^{-1} - 1 + q)\widehat{f}_{(2,1)} - \widehat{f}_{(1,1,1)}) \\
&+ (q^{\frac{1}{2}} - q^{\frac{-1}{2}})^{-2}\widehat{f}_1 \left[(q^{\frac{-1}{2}} - q^{\frac{1}{2}})\widehat{f}_2 + (q^{\frac{-1}{2}} - q^{\frac{1}{2}})\widehat{f}_{11})\right] \\
&+ \frac{1}{3}(q^{\frac{1}{2}} - q^{\frac{-1}{2}})^{-3}\widehat{f}_1^3 - \frac{1}{3}(q^{\frac{3}{2}} - q^{\frac{-3}{2}})^{-1}\widehat{f}_1(q^3, Q^3) \\
Z_{(1,1,1)} &= (q^{\frac{1}{2}} - q^{\frac{-1}{2}})^{-1}(q\widehat{f}_3 - \widehat{f}_{(2,1)} + q^{-1}\widehat{f}_{(1,1,1)}) + (q^{\frac{1}{2}} - q^{\frac{-1}{2}})^{-2}(-q^{\frac{1}{2}}\widehat{f}_2 + q^{\frac{-1}{2}}\widehat{f}_{(1,1)})\widehat{f}_1 \\
&+ \frac{1}{6}(q^{\frac{1}{2}} - q^{\frac{-1}{2}})^{-3}\widehat{f}_1^3 - \frac{1}{2} (q^{\frac{1}{2}} - q^{\frac{-1}{2}})^{-1} (q - q^{-1})^{-1}\widehat{f}_1 \widehat{f}_1(q^2, Q^2) \\
&+ \frac{1}{3}(q^{\frac{3}{2}} - q^{\frac{-3}{2}})^{-1}\widehat{f}_1(q^3, Q^3)
\end{align*}
The vertex functions in winding-3 are,

\begin{align*}
    C_{(3)} &= \frac{q^{9/2}}{(q-1)(q^2 - 1)(q^3 - 1)} \\
    C_{(2, 1)} &= \frac{q^{5/2}}{(q-1)^2(q^3 - 1)} \\
    C_{(1, 1, 1)} &= \frac{q^{3/2}}{(q-1)(q^2 - 1)(q^3 - 1)} 
\end{align*}
\end{example}
Other relevant quantities are given in Example \ref{ex:w2}.

Comparing with Equation \ref{eq:winding-3_GV}, we expect that after simplifying the right side of Equation \ref{eq:w3_thm}, the result should be, 

\[
\frac{-1}{(q^{\frac{1}{2}} - q^{\frac{-1}{2}})^{2}}\widehat{f}_3(q, Q) - \frac{1}{3(q^{\frac{3}{2}} - q^{\frac{-3}{2}})^{2}}\widehat{f}_1(q^3, Q^3)
\]
Indeed, it is true. Hence, after equating the resulting generating functions, then on the level of individual invariants, we have, 

\begin{corollary}
\label{cor:winding-3}
For $w = 3$, Example \ref{ex:w3} of Theorem \ref{thm:GRZZ} implies,

\begin{align*}
n_g(K_{\widehat{X}}, \pi^*\beta - 3C) &= (-1)^g N^{\mathrm{LMOV}}_{g, (3)}(K_X/L, \beta) 
\end{align*} 
\end{corollary}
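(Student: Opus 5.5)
The plan is to prove Corollary \ref{cor:winding-3} by direct expansion of the right side of Equation \ref{eq:w3_thm}, followed by coefficient comparison with Equation \ref{eq:winding-3_GV}. Alternatively, it is the $w=3$ instance of the general argument of Section \ref{sec:proof}, and I will use that argument's structure to organize the computation.

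\textbf{Step 1: expand in monomials.} Substitute the listed expressions for $Z_{(3)}, Z_{(2,1)}, Z_{(1,1,1)}$, the winding-2 data $Z_{(2)}, Z_{(1,1)}, Z_{(1)}$ from Example \ref{ex:w2}, and the vertex values $C_{(3)}, C_{(2,1)}, C_{(1,1,1)}, C_{(2)}, C_{(1,1)}, C_{(1)}$ into the right side of Equation \ref{eq:w3_thm}. Then sort the result by monomials in the $\widehat{f}$'s. There are three groups:
\begin{itemize}
\item the linear terms $\widehat{f}_{(3)}, \widehat{f}_{(2,1)}, \widehat{f}_{(1,1,1)}$ and $\widehat{f}_{(1)}(q^3,Q^3)$;
\item the quadratic cross terms $\widehat{f}_{(1)}\widehat{f}_{(2)}$, $\widehat{f}_{(1)}\widehat{f}_{(1,1)}$ and $\widehat{f}_{(1)}\widehat{f}_{(1)}(q^2,Q^2)$;
\item the cubic term $\widehat{f}_{(1)}^3$.
\end{itemize}

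\textbf{Step 2: linear terms.} Rather than simplifying rational functions blindly, I will use the identity from Section \ref{sec:matching}:
\[
\sum_{|\nu|=3}\chi^{\nu}(\mu)\, s_{\nu^T}(q^{\rho}) = (-1)^{3-l(\mu)}\, p_{\mu}(q^{\rho}).
\]
Combined with Equation \ref{eq:p and P}, this identity gives the following coefficients:
\begin{itemize}
\item $\widehat{f}_{(3)}$ receives $-(q^{\frac12}-q^{-\frac12})^{-2}$;
\item $\widehat{f}_{(2,1)}$ and $\widehat{f}_{(1,1,1)}$ receive $0$, by character orthogonality;
\item $\widehat{f}_{(1)}(q^3,Q^3)$ receives $-\tfrac13(q^{\frac32}-q^{-\frac32})^{-2}$.
\end{itemize}
As a sanity check, I will verify the $\widehat{f}_{(2,1)}$ cancellation by hand using the explicit $C$'s, for example $C_{(3)}+C_{(1,1,1)}-C_{(2,1)}$-type combinations.

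\textbf{Step 3: cross terms.} The contributions from the $Z_{(1)}C_{(1)}\bigl(Z_{(2)}C_{(1,1)}+Z_{(1,1)}C_{(2)}\bigr)$ and $-\tfrac13 (Z_{(1)}C_{(1)})^3$ pieces must cancel against the $d\ge 2$ parts of the $Z_{(3)}, Z_{(2,1)}, Z_{(1,1,1)}$ terms. I will reduce each product through the same power-sum identity, using Example \ref{ex:w2} to simplify $Z_{(2)}C_{(1,1)}+Z_{(1,1)}C_{(2)}$ first. After that reduction, the surviving numerical coefficients should be exactly the $d=2,3$ instances of Proposition \ref{prop:stirling}: $\sum_b \frac{(-1)^{b+1}}{b}\frac{b!S(d,b)}{d!}=0$. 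For $d=3$ this reads $1-\tfrac32+\tfrac13\cdot... $, i.e.\ $\tfrac16-\tfrac12+\tfrac13=0$, which is the $\widehat f_{(1)}^3$ cancellation.

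\textbf{Step 4: compare with the Gopakumar--Vafa side.} After Steps 2 and 3 the right side of Equation \ref{eq:w3_thm} equals
\[
-(q^{\frac12}-q^{-\frac12})^{-2}\widehat{f}_{(3)}(q,Q)-\tfrac13(q^{\frac32}-q^{-\frac32})^{-2}\widehat{f}_{(1)}(q^3,Q^3).
\]
Cancel $Q^{-3C}$ against $Q^{3E}$ by flop invariance \cite{KM}. By Corollary \ref{cor:winding-1} applied to the class $\frac{\pi^*\beta}{3}-C$, the $k=3$ term of Equation \ref{eq:winding-3_GV} matches the $\widehat{f}_{(1)}(q^3,Q^3)$ term. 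The $k=1$ term then gives
\[
\sum_g (-1)^{g-1}n_g(K_{\widehat{X}},\pi^*\beta-3C)(q^{\frac12}-q^{-\frac12})^{2g-2}=-(q^{\frac12}-q^{-\frac12})^{-2}\sum_g N^{LMOV}_{g,(3)}(K_X/L,\beta)(q^{\frac12}-q^{-\frac12})^{2g}.
\]
Comparing coefficients of $(q^{\frac12}-q^{-\frac12})^{2g}$ yields $n_g(K_{\widehat{X}},\pi^*\beta-3C)=(-1)^gN^{LMOV}_{g,(3)}(K_X/L,\beta)$.

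\textbf{Main obstacle.} I expect Step 3 to be the hardest, specifically the bookkeeping of the symmetry factors $\frac{1}{d_i!}\binom{d_i}{\cdots}$. The mixed term $\widehat{f}_{(1)}\widehat{f}_{(1)}(q^2,Q^2)$ pairs different $n_i$, so I must check that it cancels with the correct weights between $Z_{(3)}$, $Z_{(1,1,1)}$ and the $b=2$ product. I will check this cancellation explicitly first.
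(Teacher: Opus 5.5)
Your proposal is correct and takes essentially the paper's route. You reduce the right side of Equation \ref{eq:w3_thm} to $-(q^{\frac12}-q^{-\frac12})^{-2}\widehat{f}_{(3)}-\frac13(q^{\frac32}-q^{-\frac32})^{-2}\widehat{f}_{(1)}(q^3,Q^3)$, which the paper simply asserts after listing the explicit $Z$'s and $C$'s, while you organize it via the matching and vanishing arguments of Section \ref{sec:proof}; you then compare with Equation \ref{eq:winding-3_GV}, using the $w=1$ identity at $(q^3,Q^3)$ to remove the $k=3$ term. One small correction to Step 3, which does not affect the cancellation: for the two-distinct-factor cross terms $\widehat{f}_{(1)}\widehat{f}_{(2)}$ and $\widehat{f}_{(1)}\widehat{f}_{(1)}(q^2,Q^2)$ the $b=1,2$ weights are $1$ and $2$ (i.e.\ $b!\,S(2,b)$, not $b!\,S(2,b)/2!$), and $\widehat{f}_{(1)}\widehat{f}_{(1,1)}$ already vanishes for each $b$ separately by character orthogonality.
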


\bibliographystyle{alpha}
\bibliography{refs}

\end{document}